\newtheorem{theorem}{Theorem}
\newtheorem{corollary}[theorem]{Corollary}
\newtheorem{lemma}[theorem]{Lemma}
\newtheorem{proposition}[theorem]{Proposition}
\newtheorem{definition}[theorem]{Definition}
\newtheorem{assumption}[theorem]{Assumption}
\newtheorem{remark}[theorem]{Remark}
\newcommand{\ubar}[1]{\mkern2mu\underline{\mkern-1.5mu#1\mkern-2.5mu}\mkern4mu }
\def\Is{\int_{\mathbb{S}^2}}
\def \mx {\mathbf{x}}
\def \my {\mathbf{y}}
\def \mz {\mathbf{z}}
\def \mY {\mathbf{Y}}
\def \St {\mathbb{S}^2}
\def \Sd {\mathbb{S}^d}
\def \bR {\mathbb{R}}
\def \Xc {\hat{X}_N}
\def \mw {\mathbf{w}}
\def \me {\mathbf{e}}
\def \als {a_\ell^{(s)}}
\def \aals {\alpha_\ell^{(s)}}
\def \sumlo {\sum_{\ell=1}^{\infty}}
\def \sumk {\sum_{k=1}^{2\ell+1}}
\def \Hsst {\mathbb{H}^s(\St)}
\def\Is{\int_{\mathbb{S}^2}}
\title{Spherical $t_\epsilon$-Designs for Approximations on the Sphere}
\author{Yang Zhou\thanks{Department of Applied Mathematics, The Hong Kong Polytechnic University. (Email: andres.zhou@connect.polyu.hk).
 This author's work is supported in part by Hong Kong Research Council Grant PolyU5002/13p.}
              \and
Xiaojun Chen\thanks{Department of Applied Mathematics, The Hong Kong Polytechnic University
             (Email: maxjchen@polyu.edu.hk).
              This author's work is supported in part by Hong Kong Research Council Grant PolyU5001/12p.}}
\begin{document}

\maketitle
%\slugger{mms}{xxxx}{xx}{x}{x--x}%slugger should be set to mms, siap, sicomp, sicon, sidma, sima, simax, sinum, siopt, sisc, or sirev

\begin{abstract}
A spherical $t$-design is a set of points on the sphere that are nodes of a positive equal weight quadrature rule  having algebraic accuracy $t$ for all spherical polynomials with degrees  $\le t$.
Spherical $t$-designs have many distinguished properties in approximations on the sphere and receive remarkable attention.
Although the existence of a spherical $t$-design is known for any $t\ge 0$, a spherical design is only known in  a set of interval enclosures on the sphere \cite{chen2011computational} for $t\le 100$.
It is unknown how to choose  a set of points from the set of interval enclosures to obtain a spherical $t$-design.
In this paper we investigate a new concept of point sets on the sphere named spherical $t_\epsilon$-design ($0<\epsilon<1$), which are nodes of a positive weight quadrature rule with algebraic accuracy $t$.
The sum of the weights is equal to the area of the sphere and the mean value of the weights is equal to the weight of the quadrature rule defined by the spherical $t$-design.
A spherical $t_\epsilon$-design is a spherical $t$-design when $\epsilon=0,$ and a spherical $t$-design is a spherical $t_\epsilon$-design for any $0<\epsilon <1$.
We show that any point set chosen from the set of interval enclosures \cite{chen2011computational} is a spherical $t_\epsilon$-design.
We then study the worst-case errors of quadrature rules using spherical $t_\epsilon$-designs in a Sobolev space, and investigate a model of polynomial approximation with the $l_1$-regularization using spherical $t_\epsilon$-designs.
Numerical results illustrate good performance of spherical $t_\epsilon$-designs for numerical integration and function approximation on the sphere.

\end{abstract}

%\begin{keywords}
%spherical $t$-designs, polynomial approximation, interval analysis, numerical integration, $l_1$-regularization
%\end{keywords}
%
%\begin{AMS}
%65D30, 41A10, 65G30
%\end{AMS}
{\bf Keywords:} spherical $t$-designs, polynomial approximation, interval analysis, numerical integration, $l_1$-regularization
%\PACS{90C30 \and 90C26 \and 65K05 \and 49M37}

{\bf AMS suject classifications:} 65D30, 41A10, 65G30

\pagestyle{myheadings}
\thispagestyle{plain}
\markboth{Y. ZHOU, X. CHEN}{SPHERICAL $t_\epsilon$-DESIGNS FOR  APPROIAMTION ON THE SPHERE}

\section{Introduction}
For a $d$-dimensional sphere
$$\mathbb{S}^d := \{\mx = (x_1,\ldots,x_{d+1})^T \in \mathbb{R}^{d+1} \ | \  \|\mx\|^2_2 =1\},$$
where $\|\cdot\|_2$ means the Euclidean norm, a spherical $t$-design \cite{delsarte1977spherical} for a given positive integer $t$ is a set of $N$ points $X_N = \{ \mx_1,\ldots,\mx_N\}\subset \Sd$ such that
\begin{equation}\label{sphericaldesign}  \frac 1N \sum_{j = 1}^{N}p(\mx_j) = \frac1{|\Sd|} \int_{\Sd}p(\mx){\rm d}\omega_d(\mx)
\end{equation}
holds for all spherical polynomials $p$ with degree $\le t$, in which $|\Sd|$ is the area of $\Sd$ and ${\rm d}\omega_d$ is the surface measure.
For a numerical integration rule on the sphere, we say that the rule has \textit{algebraic accuracy $t$}, or the rule is of \textit{$t$-algebraic accuracy}, if the rule integrates all spherical polynomials with degree $\le t$ exactly. A spherical $t$-design establishes a positive equal weight quadrature rule with algebraic accuracy $t$ for numerical integration on the whole sphere, which is also proved to perform well for numerical integration of spherical  functions belonging to Sobolev spaces, see \cite{brauchart2014qmc} for detail.

The existence of spherical $t$-designs for arbitrary degree $t$ was proved by Seymour et al \cite{seymour1984averaging} in 1984.
Consequently, a natural problem is to find the minimal number of points such that (\ref{sphericaldesign}) holds for fixed $t$ and $d$, denoted as  $N(d, t)$.
In 1993, Korevaar et al \cite{korevaar1993spherical} proved that $N(d, t) \leq C_dt^{(d^2+d)/2}$ and conjectured that $N(d, t) \leq  C_dt^d$ for a sufficiently large positive constant $C_d$ depending only on $d$. This conjecture was then proved  by Bondarenko et al \cite{bondarenko2010optimal} in 2011.
For $d = 2$, there is an even stronger conjecture by Hardin et al \cite{hardin1996mclaren} saying that $N(2, t) \leq \frac12 t^2 + o(t^2)$ as $t \to \infty$. Numerical evidence supporting the conjecture was also given in \cite{hardin1996mclaren,sloan2009variational}.
Spherical $t$-designs have been extensively studied from various viewpoints, among which the application to polynomial approximation and the number of points needed to construct a spherical $t$-design have been paid great attention, see \cite{AN_CHEN,an2012regularized,bajnok1992construction,bannai1979tight,bannai2009survey,bannai2012remarks,chen2011computational,hardin1996mclaren,korevaar1993spherical,sloan2009variational}.
Moreover, numerical methods  have been developed for finding spherical $t$-designs. Usually in those methods  finding spherical designs is reformulated to problems of  solving a system of nonlinear equations or optimization problems, see \cite{AN_CHEN,graf2011computation,sloan2009variational}. Numerical results suggest  that those methods can find approximate spherical $t$-designs with high precision.

However, in general, for any given $t$ the exact location of a spherical $t$-design $X_N=\{\mx_1,\ldots,\mx_N\}$ is unknown. The best we know is that there is a set of points $\hat{\mx}_i, \ i=1,\ldots, N$ such that a  set of narrow intervals defined by
$$
\mathbb{X}_N = \{ [\mx]_i = \mathcal{C}(\hat\mx_i,\gamma_i),\  i = 1,\ldots,N, \ \hat\mx_i \in \St, \ \gamma_i >0\} \subset \St$$
can be computed to contain a spherical $t$-design for $d =2$, $N = (t+1)^2$ and $t\le 100$ in \cite{chen2011computational}, where
$$
\mathcal{C}(\hat\mx_i,\gamma_i) = \{\mx \in \mathbb{S}^2 \mid \cos^{-1}(\mx \cdot \hat\mx_i) \leq \gamma_i \}.
$$

Among all the spherical polynomials with degree $\le t$,  if the zero polynomial is the only one that vanishes at each point in the set $X_N \in \St$, then the point set $X_N \in \St$ is said to be fundamental with order $t$.
In 2011, Chen et al \cite{chen2011computational} proposed a computational-assisted proof for the existence of spherical $t$-designs on $\St$ with $N = (t + 1)^2$ for all values of $t \leq 100$.
An interval arithmetic based algorithm is proposed to compute a series of sets of  polar coordinates type interval enclosures containing fundamental spherical $t$-designs.
By choosing the center points of each interval enclosures, an approximate spherical $t$-design can be obtained and numerical results show that the Weyl sums of these point sets are very close to 0.
However, though we have known the existence of spherical $t$-designs for $t\le 100$ and $N= (t+1)^2$ in a set of interval enclosures, we still can not obtain an exact spherical $t$-design in the set of interval enclosures for a positive equal weight quadrature rule with algebraic accuracy $t$. Motivated by this problem,  in this paper we  relax the equal weights to the ones whose mean value is still $\frac{|\Sd|}{N}$ but can be chosen in an interval with respect to a number $0 \le \epsilon <1$.

\begin{definition}\label{t_epsi}{ \emph{(Spherical $t_\epsilon$-deisgn)}}
	A spherical $t_\epsilon$-design with $ 0 \leq \epsilon < 1$ on $\mathbb{S}^{d}$ is a set of points
	$X^\epsilon_N := \{ \mathbf{x}_1^\epsilon,\ldots,\mathbf{x}_N^\epsilon\} \subset \mathbb{S}^{d} $ such that
	the quadrature rule with  weights $\mw =(w_1,\ldots,w_N)^T$ satisfying
	\begin{equation}\label{a_t_design2}
	\frac{|\Sd|}{N}(1-\epsilon) \leq w_i \leq \frac{|\Sd|}{N}(1-\epsilon)^{-1}, \quad i = 1,\ldots,N,
	\end{equation}
	is exact for all spherical polynomials $p$ of degree at most $t$,  that is,
	\begin{equation}\label{a_t_design}
	\sum_{i=1}^{N}w_i  p(\mathbf{x}^\epsilon_i) = \int_{\Sd} p(\mathbf{x}) {\rm d}\omega_d(\mathbf{x}).
	\end{equation}
\end{definition}

The concept of spherical $t_\epsilon$-designs establishes a bridge of spherical $t$-designs and ordinary positive weight quadrature rules with  algebraic accuracy $t$.

\begin{remark}
	A spherical $t$-design  is  a spherical $t_0$-design with $\epsilon = 0$. By letting $p(\mx) \equiv 1$ in (\ref{a_t_design}) we can obtain $\sum_{i =1}^{N}w_i = |\Sd|$ and thus $ 0 < w_i < |\Sd|$ for $i = 1,\ldots,N$.
\end{remark}

%Denote by $N(d, t)$ as the minimal number of points for a spherical $t$-design in $\Sd$.
Since the existence of spherical $t$-designs has been proved for arbitrary $t$, and a spherical $t$-design is also a spherical $t_\epsilon$-design for arbitrary $0 \le \epsilon <1$, we have the existence of spherical $t_\epsilon$-designs.
Due to the relaxation of the weights $\mw$, an important advantage is that we can have a positive weight quadrature rule with algebraic accuracy $t$.
Moreover, our numerical experiments show that with the increase of $\epsilon$ we can get numerical integration with  polynomial precision using fewer points than spherical $t$-designs.

The rest of this paper is organized as the following. In Section 2, we discuss the relationship between spherical $t_\epsilon$-designs and spherical $t$-designs when they are fundamental systems with same number of points.
Based on the results  we  study the sets of interval enclosures containing fundamental spherical $t$-designs in Section 3.
We prove that all point sets arbitrarily chosen in those sets of interval enclosures computed in \cite{chen2011computational} are spherical $t_\epsilon$-designs.
In Section 4, we analyze the worst-case errors of spherical $t_\epsilon$-designs for numerical integration on the unit sphere $\St$.
Numerical results show that compared with the equal weight quadrature rules using spherical $t$-designs, the worst-case errors can be improved with relaxing the weights to define quadrature rules  using spherical $t_\epsilon$-designs.
In Section 5, we investigate an $l_2-l_1$ regularized  weighted least squares model for polynomial approximation on the two-sphere using spherical $t_\epsilon$-designs and present  numerical results to demonstrate the efficiency of the $l_2-l_1$ model.

In this paper we concentrate on the case $d =2$.
Throughout the paper we assume that all the points in a point set on the unit sphere are distinct.
The computation is implemented in Matlab 2012b and  done on a Lenovo Thinkcenter PC equipped with Intel Core i7-3770 3.4G Hz CPU, 8 GB RAM running Windows 7.

\section{Spherical $t_\epsilon$-designs: neighborhood of  spherical $t$-designs}

In this section we will study the relationship between spherical $t_\epsilon$-designs and spherical $t$-designs when they are both fundamental systems and have the same number of points.
A spherical $t$-design defines an equal weight quadrature rule with algebraic accuracy $t$ while a spherical $t_\epsilon$-design defines a positive weight quadrature rule with algebraic accuracy $t$.
Based on these two properties, in this section we study a neighborhood of spherical $t$-designs.
Denote
\begin{eqnarray*}
	% \nonumber to remove numbering (before each equation)
	\mathbb{P}_{t} := \mathbb{P}_{t}(\St) &=& \{ \mbox{spherical \ polynomials \ of \ degree \ $\leq t$ } \} \\
	\quad &=& \mathrm{span} \{Y_{\ell,k}:\ell = 0,\ldots,t,k=1,\ldots,2\ell+1\},
\end{eqnarray*}
as the space of all spherical polynomials with degree $\le t$ on the unit two-sphere $\St$. Here $Y_{\ell,k}$ is a fixed $\mathbb{L}_{2}$-orthogonormal real spherical harmonic of degree $\ell$ and order $k$, which means
\begin{equation}\label{Ylkint}
\int_{\mathbb{S}^2}Y_{\ell,k}(\mx)Y_{\ell^\prime,k^\prime}(\mx){\rm d}\omega(\mathbf{x}) = \delta_{\ell,\ell^\prime}\delta_{k,k^\prime}, \quad \ell,\ell^\prime=0,\ldots,t; \ k,k^\prime=1,\ldots,2\ell+1,
\end{equation}
where ${\rm d}\omega(\mx) = {\rm d}\omega_2(\mx)$, and $\delta_{\ell,\ell'}$ is the Kronecker delta. It is well known that
\begin{equation}\label{dim_P}
d_t := \mathrm{dim}(\mathbb{P}_{t}) = \sum_{\ell = 0}^{t}(2\ell+1)  = (t+1)^2.
\end{equation}
By the addition theorem we have
\begin{equation}
\sum_{k  = 1}^{2\ell +1}  Y_{\ell,k}(\mx)Y_{\ell,k}(\my) = \frac{2\ell +1}{4\pi} P_{\ell}(\mx \cdot \my),
\end{equation}
which implies $ \sum_{k =1}^{2\ell +1} Y_{\ell,k}^2(\mx)  = \frac{2\ell+1}{4\pi}$, where $P_{\ell}, \ \ell \ge 0$ denotes the \textit{Legendre Polynomial} and $\mx \cdot \my $ denotes the Euclidean inner product. Hence we obtain
\begin{equation}\label{Y_lk_upperbound}
\|Y_{\ell,k}\|_{C(\St)} \leq \max_{\mx \in \St}\left(\sum_{k =1}^{2\ell +1} Y_{\ell,k}^2(\mx)\right)^{\frac12} = \sqrt{\frac{2\ell +1}{4\pi}} \quad {\rm for} \  k = 1,\ldots,2\ell+1, \ \ell \ge 0.
\end{equation}
Indeed, a spherical coordinate form of real spherical harmonics  can be represented as (see \cite{abramowitz1972handbook,atkinson2012spherical})
\begin{equation}\label{Ylk}
Y_{\ell,k}(\mx) = \left\{
\begin{array}{ll}
\sqrt{2}N_{\ell,k}P_\ell^{\ell +1 -k}(\cos \theta ) \cos k \varphi , & k = 1 ,\ldots,\ell, \\
\\
N_{\ell,k}P_\ell^{0}(\cos \theta ) , & k = \ell +1,\\
\\
\sqrt{2}N_{\ell,k}P_\ell^{k-\ell-1}(\cos \theta ) \sin k \varphi , & k = \ell +2,\ldots,2\ell+1, \\
\end{array}
\right.
\end{equation}
with
$$
\mx = \left(
        \begin{array}{c}
          \sin \theta \cos \varphi \\
           \sin \theta \sin \varphi\\
            \cos \theta\\
        \end{array}
      \right),
$$
where $ 0 \le\theta \le \pi$, $0\le \varphi < 2\pi$ and $N_{\ell,k}$ are the normalization coefficients
\begin{equation*}
N _{\ell,k} = \sqrt{\frac{2\ell+1}{4\pi} \frac{(\ell-|k-\ell-1|)!}{(\ell+|k-\ell-1|)!}}, \quad k = 1,\ldots,2\ell+1.
\end{equation*}
When taking $k = \ell +1$ and $\theta = 0$ we can obtain $Y_{\ell,\ell+1}(\mx) \equiv \sqrt{\frac{2\ell +1}{4\pi}}$ with $\mx = (0,0,1)^T$.
Therefore, (\ref{Y_lk_upperbound}) is a sharp upper bound of $\|Y_{\ell,k}\|_{C(\St)}$.
%and $\|Y_{\ell,k}\|_{C(\St)}$ denotes the uniform norm of $Y_{\ell,k}$.
With the fact that all spherical harmonics are a basis of $\mathbb{P}_{t}$ we have that
a finite point set $X_N = \{\mx_1,\ldots,\mx_N\}$ is a spherical $t$-design if and only if the \emph{Weyl sums}
\begin{equation}\label{weyl_sum}
\sum_{i = 1}^N Y_{\ell,k}(\mx_i) = 0, \quad k = 1,\ldots,2\ell+1, \ \ell = 1,\ldots,t,
\end{equation}
hold,
see \cite{delsarte1977spherical,sloan2009variational} for details.
Let $X_N = \{ \mx_1,\cdots,\mx_N\} \subset \St$ and $X_N' = \{ \mx_1',\cdots,\mx_N'\}\subset \St$ be two point sets on the sphere.
To describe the relationship among  points and point sets, we introduce the following definitions of distances.
\begin{itemize}
  \item[\textbf{1.}] Define the \emph{geodesic distance} between two points $\mx_i,\mx_j \in \St$ as
$$
{\rm dist}(\mx_i,\mx_j) = \cos^{-1}(\mx_i \cdot \mx_j). $$
  \item[\textbf{2.}] Define the \emph{separation distance} of a point set $X_N$ as
\begin{equation*}
  \rho(X_N) = \min_{i \neq j} \cos^{-1}(\mx_i \cdot \mx_j),
\end{equation*}
which represents the minimal geodesic distance between two different points  in $X_N$.
  \item[\textbf{3.}] Define the \emph{least distance} from a point $\mx \in \St$  to a point set $X_N $ as
$${\rm dist}(\mx , X_N) = \min_{ i } \ {\rm dist}(\mx,\mx_i) = \min_{i } \ \cos^{-1}(\mx \cdot \mx_i),$$
which can  be seen as the geodesic distance from $\mx$ to its projection  on $X_N$.
  \item[\textbf{4.}] Define the \textit{Hausdorff distance} between two point sets  $X_N$ and $X_N'$ by
%\begin{equation}
% = \max\{\max_i\min_j\cos^{-1}(\mx_i^\prime \cdot \mx_j),\max_j\min_i\cos^{-1}(\mx_i^\prime \cdot \mx_j)\}.
%\end{equation}
\begin{eqnarray}\label{hausdorff}
% \nonumber to remove numbering (before each equation)
 \nonumber \sigma(X_N,X_N') &=& \max\{\max_{  i} {\rm dist}(\mx_i^\prime , X_N) , \ \max_{ i } {\rm dist}(\mx_i , X_N')\} \\
   &=&  \max\{\max_i\min_j\cos^{-1}(\mx_i^\prime \cdot \mx_j),\max_j\min_i\cos^{-1}(\mx_i^\prime \cdot \mx_j)\}.
\end{eqnarray}
\end{itemize}
Note that $  \sigma(X_N,X_N')  =  \sigma(X_N',X_N)$ and $ \sigma(X_N,X_N') = 0$ if and only if $X_N = X_N'$.

% Similarly, denote the distance from a point set $X_N$ to another points set $X_N' = \{ \mx_1', \cdots, \mx_N'\} \subset \St $ by
%$$
%{\rm dist} (X_N, X_N') = \max_{}$$

\begin{remark}\label{hausdorff_remark}
For two point sets $X_N$ and $X_N'$, if $\sigma(X_N,X_N')<\frac12\rho(X_N)$,
 then for each $\mx_i \in X_N$ there exists a unique $\mx_j' \in X_N'  \cap \mathcal{C}(\mx_i,\frac12\rho(X_N))$, where
$$
\mathcal{C}(\mx_i,\frac12\rho(X_N)) = \{\mx \in \mathbb{S}^2 \mid \cos^{-1}(\mx \cdot \mx_i) \leq \frac12\rho(X_N) \}.
$$
On account of the relationship between $\mathcal{C}(\mx_i,\frac12\rho(X_N))$ and  $X_N'$, in what follows we will denote $\mx_i'$ as the point belonging to $\mathcal{C}(\mx_i,\frac12\rho(X_N))$ for sake of consistency under the assumption $\sigma(X_N,X_N')<\frac12\rho(X_N)$.
\end{remark}

For a point set $X_N = \{ \mx_1,\cdots,\mx_N\}\subset \St$ we define the matrix $ \mathbf{Y}(X_N) \in \mathbb{R}^{N\times d_{t}}$ with its elements as
\begin{equation}\label{mY}
  \mathbf{Y}_{i,\ell^2+ k}(X_N) =Y_{\ell,k}(\textbf{x}_{i}), \quad i=1,\ldots,N,\ k=1,\ldots,2\ell+1,\ \ell=0,\ldots,t.
\end{equation}
Note that $|\St | = 4\pi$.
An equivalent condition of spherical $t_\epsilon$-designs given in \cite{annual14} states that a point set $X^\epsilon_N := \{ \mathbf{x}^\epsilon_1,\ldots,\mathbf{x}^\epsilon_N\} \subset \mathbb{S}^2 $ is a spherical $t_\epsilon$-design if and only if
\begin{equation}\label{Yw}
	\mathbf{Y}(X^\epsilon_N)^T \mw - \sqrt{4\pi}  \me_1 = \mathbf{0}
	~~~~{\rm and} \quad   \frac{4\pi(1-\epsilon)}{N}\me \le \mw \le \frac{4\pi(1-\epsilon)^{-1}}{N}\me,
\end{equation}
where
$\me_1 = (1,0,\ldots,0)^T \in \bR^{(L+1)^2}$ and $\me = (1,\ldots,1)^T \in \mathbb{R}^{N}$.

For two matrices constructed by two near enough point sets on $\St$ we have the following property.

\begin{proposition}\label{proposition2.2}
	For any two point sets $X_N$, $X_N' \subset \St$ satisfying $\sigma(X_N,X_N') < \frac12 \rho(X_N)$ there always holds
	\begin{eqnarray}\label{difference_matrixnorm_bound}
	\nonumber \|( \mY(X_N) - \mathbf{Y}(X_N'))^T\|_\infty & = & \| \mY(X_N) - \mathbf{Y}(X_N')\|_1 \\
	& \leq & N(t+1)\sqrt{\frac{2t+1}{4\pi}}\sigma(X_N,X_N'),
	\end{eqnarray}
	where $\mY(X_N), \mY(X_N') \in \bR^{N\times d_t}$ are matrices defined by (\ref{mY}) which depend on $t$.
\end{proposition}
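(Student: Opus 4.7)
The plan begins with the easy observation that the asserted equality $\|(\mathbf{Y}(X_N)-\mathbf{Y}(X_N'))^T\|_\infty=\|\mathbf{Y}(X_N)-\mathbf{Y}(X_N')\|_1$ is just the general matrix-norm identity $\|A^T\|_\infty=\|A\|_1$ (maximum absolute row sum of $A^T$ equals maximum absolute column sum of $A$). This reduces the whole proof to bounding, uniformly over columns indexed by $(\ell,k)$ with $0\le\ell\le t$, $1\le k\le 2\ell+1$, the column sum $S_{\ell,k}:=\sum_{i=1}^{N}|Y_{\ell,k}(\mathbf{x}_i)-Y_{\ell,k}(\mathbf{x}_i')|$.

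To couple the rows of the two matrices I would invoke Remark~\ref{hausdorff_remark}: the hypothesis $\sigma(X_N,X_N')<\tfrac12\rho(X_N)$ delivers, for each $\mathbf{x}_i\in X_N$, a unique companion $\mathbf{x}_i'\in X_N'\cap\mathcal{C}(\mathbf{x}_i,\tfrac12\rho(X_N))$, and under the paper's convention the rows of $\mathbf{Y}(X_N')$ are indexed via this pairing. In particular ${\rm dist}(\mathbf{x}_i,\mathbf{x}_i')\le\sigma(X_N,X_N')$ for every $i$, so $S_{\ell,k}\le N\cdot\mathrm{Lip}(Y_{\ell,k})\cdot\sigma(X_N,X_N')$, where $\mathrm{Lip}(Y_{\ell,k})$ is the Lipschitz constant of $Y_{\ell,k}$ with respect to the geodesic distance on $\mathbb{S}^2$. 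The remaining task is therefore purely analytic: prove the uniform bound $\mathrm{Lip}(Y_{\ell,k})\le(t+1)\sqrt{(2t+1)/(4\pi)}$ for every $\ell\le t$ and $k$.

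Since ${\rm dist}(\mathbf{x}_i,\mathbf{x}_i')<\tfrac12\rho(X_N)<\pi$, a unique minimizing geodesic exists, and the spherical mean value theorem reduces the Lipschitz estimate to a sup bound on the surface gradient: $\mathrm{Lip}(Y_{\ell,k})\le\|\nabla^\ast Y_{\ell,k}\|_{C(\mathbb{S}^2)}$. To extract a sharp constant I would exploit the addition-theorem consequence $\sum_{k=1}^{2\ell+1}Y_{\ell,k}^2(\mathbf{x})=(2\ell+1)/(4\pi)$, which is constant on $\mathbb{S}^2$. Applying the Laplace--Beltrami operator $\Delta^\ast$ to both sides, invoking the product rule $\Delta^\ast(Y_{\ell,k}^2)=2Y_{\ell,k}\Delta^\ast Y_{\ell,k}+2|\nabla^\ast Y_{\ell,k}|^2$ together with the eigenvalue equation $\Delta^\ast Y_{\ell,k}=-\ell(\ell+1)Y_{\ell,k}$, and summing over $k$ yields the pointwise identity $\sum_{k=1}^{2\ell+1}|\nabla^\ast Y_{\ell,k}(\mathbf{x})|^2=\ell(\ell+1)(2\ell+1)/(4\pi)$. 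Retaining only one term on the left, together with $\sqrt{\ell(\ell+1)}<t+1$ and $2\ell+1\le 2t+1$ for $\ell\le t$, gives the required gradient bound, and combining it with the pairing estimate above produces~(\ref{difference_matrixnorm_bound}).

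The main technical obstacle is the gradient bound in the third step. A direct appeal to a Bernstein-type inequality for spherical polynomials would either fail to reproduce the precise factor $(t+1)\sqrt{(2t+1)/(4\pi)}$ or require significant additional bookkeeping; the essential trick is the observation that the specific sum $\sum_k Y_{\ell,k}^2$ is constant on $\mathbb{S}^2$, so differentiating it with $\Delta^\ast$ isolates $\sum_k|\nabla^\ast Y_{\ell,k}|^2$ and pins down its value exactly, yielding the clean constant that matches the statement.
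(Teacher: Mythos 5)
Your proposal is correct, and it reaches the bound by a genuinely different route from the paper. Both arguments share the same skeleton: the identity $\|A^T\|_\infty=\|A\|_1$, the pairing $\mx_i\leftrightarrow\mx_i'$ from Remark \ref{hausdorff_remark} with ${\rm dist}(\mx_i,\mx_i')\le\sigma(X_N,X_N')$, and a column-sum estimate $S_{\ell,k}\le N\cdot{\rm Lip}(Y_{\ell,k})\cdot\sigma(X_N,X_N')$. Where you diverge is the pointwise estimate: the paper restricts $Y_{\ell,k}$ to the great circle through $\mx_i$ and $\mx_i'$, applies Bernstein's inequality for trigonometric polynomials, and then invokes the sup-norm bound (\ref{Y_lk_upperbound}), giving $|Y_{\ell,k}(\mx_i)-Y_{\ell,k}(\mx_i')|\le\sigma(X_N,X_N')(t+1)\sqrt{(2\ell+1)/(4\pi)}$; you instead bound the geodesic Lipschitz constant by $\|\nabla^\ast Y_{\ell,k}\|_{C(\St)}$ and compute it from the exact identity $\sum_{k}|\nabla^\ast Y_{\ell,k}(\mx)|^2=\ell(\ell+1)(2\ell+1)/(4\pi)$, obtained by applying $\Delta^\ast$ to the constant $\sum_k Y_{\ell,k}^2=(2\ell+1)/(4\pi)$ and using $\Delta^\ast Y_{\ell,k}=-\ell(\ell+1)Y_{\ell,k}$. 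Since $\sqrt{\ell(\ell+1)}\le\ell+1\le t+1$ and $2\ell+1\le 2t+1$, this reproduces exactly the constant $(t+1)\sqrt{(2t+1)/(4\pi)}$, so the two proofs are of essentially equal strength (per degree $\ell$ your gradient bound is of order $\ell^{3/2}$, the same order as Bernstein combined with (\ref{Y_lk_upperbound})). What your route buys is independence from Bernstein's inequality and an exact, clean gradient identity; what the paper's route buys is brevity, reusing the already-established bound (\ref{Y_lk_upperbound}) and a single classical inequality. One small cosmetic point: the uniqueness of the minimizing geodesic is not actually needed for the Lipschitz step — integrating $\nabla^\ast Y_{\ell,k}$ along any minimizing geodesic suffices — and your identification of $\mx_i'$ as the nearest point of $X_N'$ to $\mx_i$ (hence ${\rm dist}(\mx_i,\mx_i')\le\sigma(X_N,X_N')$) is exactly the content of Remark \ref{hausdorff_remark}, so no gap remains there.
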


\begin{proof}
For a point $\mx_i \in X_N$, by Remark \ref{hausdorff_remark} we let $\mx_i'$ be the unique point  located in $ \mathcal{C}(\mx_i,\frac12 \rho(X_N)) \cap X_N'$. Let $Q_{\ell,k}$ be the restriction  of $Y_{\ell,k}$ on the great circle through these two points. Then $Q_{\ell,k}$ is a trigonometric polynomial on the sphere and by \textit{Bernstein's inequality} \cite{borwein1995polynomials} and (\ref{Y_lk_upperbound}) we obtain
	\begin{eqnarray}\label{Ylkdifference}
	% \nonumber to remove numbering (before each equation)
	\nonumber |Y_{\ell,k}(\mx_i)-Y_{\ell,k}(\mx_i')|  & = & |Q_{\ell,k}(\mx_i)-Q_{\ell,k}(\mx_i')| \\
	\nonumber  & \leq & \cos^{-1}(\mx_i\cdot\mx_i') \sup|Q_{\ell,k}'|\\
	\nonumber  & \leq & \cos^{-1}(\mx_i\cdot\mx_i') (t+1) \sup|Q_{\ell,k}| \\
	\nonumber & \leq & \cos^{-1}(\mx_i\cdot\mx_i') (t+1) \|Y_{\ell,k}\|_{C(\St)}\\
   & \le & \sigma(X_N,X_N')(t+1)\sqrt{\frac{2\ell+1}{4\pi}},
	\end{eqnarray}
	where the last inequality is obtained by (\ref{Y_lk_upperbound}).
Together with (\ref{difference_matrixnorm_bound}) and (\ref{Ylkdifference})  we have
\begin{eqnarray*}
% \nonumber to remove numbering (before each equation)
  \| \mY(X_N) - \mathbf{Y}(X_N')\|_1 &=& \max_{ 0 \le \ell \le t , 1 \le k \le 2\ell+1} \sum_{j =1}^{N} |Y_{\ell,k}(\mx_j)-Y_{\ell,k}(\mx_j')| \\
   & \le & N(t+1)\sqrt{\frac{2t+1}{4\pi}}\sigma(X_N,X_N').
\end{eqnarray*}
\end{proof}

Let $X_N^0 = \{\mx_1^0,\ldots,\mx_N^0\} \subset \St $ be a fundamental spherical $t$-design. Given a number $\sigma^\ast \ge 0$, denote the neighborhood of $X_N^0$ with radius $\sigma^\ast$ by
$$
\mathcal{C}(X_N^0,\sigma^\ast) = \big\{ X_N \subset \St : \sigma(X_N,X_N^0) \le \sigma^\ast
\big\}.
$$
The following lemma indicates that any point set contained in a small enough neighborhood of a fundamental spherical $t$-design is a fundamental spherical $t_\epsilon$-design.

\begin{lemma} \label{sec1theo}
Let  $X_N^0$ be a fundamental spherical $t$-design with order $t$ and $N = (t+1)^2$. Then any point set $X_N \in \mathcal{C}(X_N^0,\sigma^\ast)$ is a fundamental spherical $t_\epsilon$-design with
\begin{equation}\label{epsi_bound1}
\frac{\tau\sigma^\ast\|\mY(X_N^0)^{-1}\|_1 }{1-\tau\sigma^\ast\|\mY(X_N^0)^{-1}\|_1 }  \le \epsilon <1,
\end{equation}
where
\begin{equation}\label{sigma_star}
\sigma^\ast < \frac12 \min \left( \frac1{\tau\|\mY(X_N^0)^{-1}\|_1} , \rho(X_N^0) \right),
\end{equation}
 with $ \tau = \sqrt{\frac{2t+1}{4\pi}}(t+1)^3$.
\end{lemma}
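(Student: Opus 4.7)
The plan is to reduce everything to matrix perturbation of the linear system that characterizes a spherical $t_\epsilon$-design via (\ref{Yw}): the set $X_N$ is such a design iff there exists a weight vector $\mw$ solving $\mY(X_N)^T\mw = \sqrt{4\pi}\,\me_1$ and lying in the asymmetric envelope (\ref{a_t_design2}). Since $X_N^0$ is a fundamental spherical $t$-design with $N = (t+1)^2$ points, the square matrix $\mY(X_N^0) \in \bR^{N\times N}$ is invertible, and the equal-weight vector $\mw^0 := (4\pi/N)\me$ already solves $\mY(X_N^0)^T \mw^0 = \sqrt{4\pi}\,\me_1$. I will therefore view $\mY(X_N)^T$ as a perturbation of $\mY(X_N^0)^T$ and produce the admissible $\mw$ by solving the perturbed system.

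Setting $E := \mY(X_N) - \mY(X_N^0)$, Proposition~\ref{proposition2.2} together with $N(t+1)=(t+1)^3$ gives $\|E^T\|_\infty = \|E\|_1 \le \tau\sigma^\ast$ for the $\tau$ of the lemma. Write $\alpha := \tau\sigma^\ast\,\|\mY(X_N^0)^{-1}\|_1$; the hypothesis (\ref{sigma_star}) forces $\alpha < 1/2$, and the Banach/Neumann perturbation lemma ensures $\mY(X_N)^T = \mY(X_N^0)^T + E^T$ is invertible. This invertibility is already enough to conclude that $X_N$ is itself a fundamental system of order $t$, since any polynomial vanishing on $X_N$ yields a null vector of $\mY(X_N)$. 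The standard perturbation estimate for linear systems then gives
\begin{equation*}
\|\mw - \mw^0\|_\infty \;\le\; \frac{\|\mY(X_N^0)^{-1}\|_1\,\|E^T\|_\infty}{1 - \|\mY(X_N^0)^{-1}\|_1\,\|E^T\|_\infty}\,\|\mw^0\|_\infty \;\le\; \frac{\alpha}{1-\alpha}\cdot\frac{4\pi}{N}.
\end{equation*}

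The one subtle step, and the main obstacle I foresee, is converting this symmetric $\ell_\infty$ bound on the weight perturbation into the \emph{asymmetric} envelope (\ref{a_t_design2}), whose lower and upper halves differ by a factor $(1-\epsilon)^{-1}$ rather than $(1+\epsilon)$. Write $\beta := \alpha/(1-\alpha)$, so the previous display gives $w_i \in [(1-\beta)\tfrac{4\pi}{N},\,(1+\beta)\tfrac{4\pi}{N}]$. I claim that $\epsilon := \beta$ already suffices: a direct computation gives $1-\epsilon = (1-2\alpha)/(1-\alpha)$, matching the lower bound exactly, while $(1-\epsilon)^{-1} = (1-\alpha)/(1-2\alpha) \ge 1/(1-\alpha) = 1+\beta$, where the inequality reduces to $(1-\alpha)^2 \ge 1-2\alpha$, i.e.\ $\alpha^2 \ge 0$. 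Since $\alpha < 1/2$ forces $\epsilon < 1$, and enlarging $\epsilon$ inside $[\beta,1)$ only widens the envelope, every $\epsilon$ in the range (\ref{epsi_bound1}) is admissible, which finishes the proof.
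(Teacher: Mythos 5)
Your proposal is correct and follows essentially the same route as the paper's proof: Proposition~\ref{proposition2.2} bounds $\|\mY(X_N)-\mY(X_N^0)\|_1$ by $\tau\sigma^\ast$, and the standard perturbation bound for the linear system $\mY(X_N)^T\mw=\sqrt{4\pi}\,\me_1$ about the equal-weight solution $\frac{4\pi}{N}\me$ gives nonsingularity of $\mY(X_N)$ together with $\|\mw-\frac{4\pi}{N}\me\|_\infty\le\frac{\alpha}{1-\alpha}\frac{4\pi}{N}$, exactly as in the paper. Your explicit verification that the symmetric bound fits the asymmetric envelope (\ref{a_t_design2}) via $(1+\beta)\le(1-\beta)^{-1}$ is a detail the paper glosses over in this lemma (it is handled only in the proof of Corollary~\ref{sec1coro}), so it is a welcome extra care rather than a different approach.
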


\begin{proof}
This lemma can be proved by showing that for any  point set $X_N \in \mathcal{C}(X_N^0,\sigma^\ast)$ we have
\begin{equation}\label{at_design_3}
\mY(X_N)^T \mw = \sqrt{4 \pi} \me_1,
\end{equation}
in which $\mY(X_N)$ is nonsingular and $\| \mw - \frac{4\pi}{N} \me\|_\infty < \frac{4\pi}{N}$.

From Proposition \ref{proposition2.2} for any point set $X_N$ with $\sigma(X_N,X_N^0)<\sigma^\ast$  we have
$$\|\mY(X_N^0)^{-1}\|_1 \|\mY(X_N^0) - \mathbf{Y}(X_N)\|_1 <\|\mY(X_N^0)^{-1}\|_1\tau\sigma^{\ast} < 1.$$
Hence $\mY(X_N)$ is nonsingular, that is, $X_N$ is a fundamental system with order $t$.

By the fact that $X_N^0$ is a fundamental spherical $t$-design and (\ref{weyl_sum}) we have
\begin{equation}\label{sph_design}
	\frac{4\pi}{N} \mathbf{Y}(X_N^0)^T \me = \sqrt{4 \pi} \me_1.
\end{equation}
Together with the well known perturbation theorem of linear systems in \cite[Theorem 2.3.9, pp.135]{watkins2004fundamentals} and
$$ \| \mathbf{I} - \mY(X_N^0))^{-1}\mY(X_N)^T\|_\infty \leq \|\mY(X_N^0)^{-1}\|_1\| \mY(X_N^0) -\mY(X_N)\|_1 <1, $$ we have
\begin{equation}\label{inequa_2}
  \frac{\|\mw-\frac{4\pi}{N}\me\|_{\infty}}{\|\frac{4\pi}{N}\me\|_{\infty}} \leq \frac{\|(\mY(X_N^0)^T)^{-1}\|_{\infty} \ \|(\mY(X_N) - \mathbf{Y}(X_N^0))^T\|_{\infty}}{1-\|(\mY(X_N^0)^T)^{-1}\|_\infty\|(\mY(X_N) - \mathbf{Y}(X_N^0))^T\|_{\infty} }.
\end{equation}

Therefore, we obtain
\begin{eqnarray*}
% \nonumber to remove numbering (before each equation)
   \|\mw-\frac{4\pi}{N}\me\|_\infty &=&\frac{4\pi}{N}\frac{\|\mw-\frac{4\pi}{N}\me\|_\infty}{\|\frac{4\pi}{N}\me\|_\infty} \\
   &\leq &  \frac{4\pi}{N} \frac{\|\mY(X_N^0)^{-1}\|_1 \|\mY(X_N^0) - \mathbf{Y}(X_N)\|_1}{1-\|\mY(X_N^0) - \mathbf{Y}(X_N)\|_1\|\mY(X_N^0)^{-1}\|_1} \\
   & \le &   \frac{4\pi}{N}\frac{\tau\sigma^\ast\|\mY(X_N^0)^{-1}\|_1 }{1-\tau\sigma^\ast\|\mY(X_N^0)^{-1}\|_1 } < \frac{4\pi}{N}. \\
\end{eqnarray*}
Hence the vector $\mw$ is positive which implies that $X_N$ is spherical $t_\epsilon$-design with
$\epsilon$ satisfying (\ref{epsi_bound1}).
\end{proof}

Based on above lemma we can deduce the following result which describes an upper bound of the radius of the neighborhood of a fundamental spherical $t$-design, in which any point set located in the neighborhood is a fundamental spherical $t_\epsilon$-design.

\begin{corollary}\label{sec1coro}
Let  $X_N^0$  be a  fundamental spherical $t$-design with order $t$ and $N = (t+1)^2$.
For any $0 \le \epsilon <1$, if
\begin{equation}\label{corollary2_4}
 \sigma(X_N,X_N^0) < \min \left( \frac12\rho(X_N^0), \frac{\epsilon}{\tau(1+\epsilon)\|\mY(X_N^0)^{-1}\|_1 }\right),
\end{equation}
then $X_N$ is a fundamental spherical $t_\epsilon$-design.
\end{corollary}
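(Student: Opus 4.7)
The plan is to reduce the corollary directly to Lemma \ref{sec1theo} by setting $\sigma^\ast$ equal to the bound given in (\ref{corollary2_4}) and inverting the $\epsilon$-estimate (\ref{epsi_bound1}). Writing $a := \tau\,\sigma(X_N,X_N^0)\,\|\mY(X_N^0)^{-1}\|_1$ will let me convert the two-sided relation between $\sigma^\ast$ and $\epsilon$ into a scalar inequality on $a$, which is easy to analyse.

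First, I would verify that the hypothesis (\ref{corollary2_4}) implies the hypothesis (\ref{sigma_star}) of Lemma \ref{sec1theo}. Since $0 \le \epsilon < 1$, we have $\epsilon/(1+\epsilon) < 1/2$, and therefore
\[
\frac{\epsilon}{\tau(1+\epsilon)\|\mY(X_N^0)^{-1}\|_1} \;<\; \frac{1}{2\tau\|\mY(X_N^0)^{-1}\|_1}.
\]
Combined with the $\tfrac12\rho(X_N^0)$ term already appearing in (\ref{corollary2_4}), this shows $\sigma(X_N,X_N^0)$ is smaller than the quantity on the right-hand side of (\ref{sigma_star}), so Lemma \ref{sec1theo} applies and $X_N$ is a fundamental spherical $t_{\epsilon'}$-design for some $\epsilon' \ge \tfrac{a}{1-a}$ satisfying (\ref{epsi_bound1}) with $\sigma^\ast$ replaced by $\sigma(X_N,X_N^0)$.

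Second, I would sharpen the value of $\epsilon'$. The map $\phi(a) = a/(1-a)$ is strictly increasing on $[0,1)$, and the second term in (\ref{corollary2_4}) is precisely the statement $a < \epsilon/(1+\epsilon)$. Evaluating $\phi$ at $\epsilon/(1+\epsilon)$ gives exactly $\epsilon$, so monotonicity yields $\tfrac{a}{1-a} < \epsilon$. Hence one can choose the weight vector produced in the proof of Lemma \ref{sec1theo} to satisfy $\|\mw - \tfrac{4\pi}{N}\me\|_\infty < \tfrac{4\pi}{N}\,\tfrac{a}{1-a} < \tfrac{4\pi}{N}\epsilon$, i.e., $\tfrac{4\pi}{N}(1-\epsilon) < w_i < \tfrac{4\pi}{N}(1+\epsilon) \le \tfrac{4\pi}{N}(1-\epsilon)^{-1}$ for every $i$.

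Finally, I would remark that this last chain of inequalities is exactly the weight condition (\ref{a_t_design2}) defining a spherical $t_\epsilon$-design, and nonsingularity of $\mY(X_N)$ (hence fundamentality) is already guaranteed by Lemma \ref{sec1theo}. There is no real obstacle here; the only subtlety is keeping track of the direction of the inequality in $\phi(a) < \epsilon$ versus $\phi(a) \le \epsilon$ so that the weight bound remains compatible with $\epsilon$ rather than requiring an arbitrarily small slack. Since (\ref{corollary2_4}) is a strict inequality, we in fact get strict bounds, which is slightly stronger than required.
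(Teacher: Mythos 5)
Your proposal is correct and follows essentially the same route as the paper: it invokes Lemma \ref{sec1theo}'s nonsingularity and weight estimate with the actual distance $\sigma(X_N,X_N^0)$, and then uses the monotonicity of $a \mapsto a/(1-a)$ (equivalently, that $a < \epsilon/(1+\epsilon)$ gives $a/(1-a) < \epsilon$) to obtain $\|\mw - \tfrac{4\pi}{N}\me\|_\infty < \tfrac{4\pi}{N}\epsilon$ and hence the weight bounds $(1-\epsilon)\tfrac{4\pi}{N} < w_i < (1+\epsilon)\tfrac{4\pi}{N} \le \tfrac{4\pi}{N}(1-\epsilon)^{-1}$, exactly as in the paper's argument. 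Your explicit verification that \eqref{corollary2_4} implies \eqref{sigma_star} is a detail the paper leaves implicit, but the substance is identical.
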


\begin{proof}
By the fact that $\sigma(X_N,X_N^0) < \sigma^\ast$ which is defined in (\ref{sigma_star}), we have $\mY(X_N)$ is nonsingular and
\begin{eqnarray*}
% \nonumber to remove numbering (before each equation)
   \|\mw-\frac{4\pi}{N}\me\|_\infty  \leq    \frac{4\pi}{N}\frac{\tau\sigma(X_N,X_N^0)\|\mY(X_N^0)^{-1}\|_1 }{1- \tau\sigma(X_N,X_N^0)\|\mY(X_N^0)^{-1}\|_1} <  \frac{4\pi}{N}\epsilon.
\end{eqnarray*}
Hence, from (\ref{corollary2_4}), we derive
$$ ( 1 - \epsilon )  \frac{4\pi}{N }  \me < \mw < (1 + {\epsilon})\frac{4\pi}{N }  \me  \le \frac{4\pi (1-\epsilon)^{-1}}{N }  \me .$$
We complete the proof.
\end{proof}

\section{Interval analysis of spherical $t$-designs and spherical $t_\epsilon$-designs}
In this section we will study the sets of interval enclosures containing fundamental spherical $t$-designs.
In the last section we describe a neighborhood of a fundamental spherical $t$-design in which any point set is a fundamental spherical $t_\epsilon$-design.
A series of sets of interval enclosures of spherical $t$-designs can be computed in \cite{chen2011computational}, but the exact location of spherical $t$-designs can not be obtained.
In the following we will show that any point set in the set of the interval enclosures given in \cite{chen2011computational} is a fundamental spherical $t_\epsilon$-design.
Let
\begin{equation}\label{setX}
\mathbb{X}_N = \{[\mx]_i = \mathcal{C}(\hat{\mx}_{i},\gamma_i) \subset \St , \ i = 1 , \ldots, N\}
\end{equation}
be a set of spherical caps, with $\hat{\mx}_i$ as the center point and $\gamma_i$ as the radius.
Additionally, let $\hat{X}_N =\{ \hat{\mx}_{1}, \ldots , \hat{\mx}_{N}\}  \subset \St$
be the set of center points.
Define the radius of $\mathbb{X}_N$ by $${\rm rad} (\mathbb{X}_N) =  \max_{1 \leq i \leq N} \gamma_i,$$
and the separation distance of $\mathbb{X}_N$ by
$$
\rho(\mathbb{X}_N) =  \min_{ \tiny {\begin{array}{c}
                               i \neq j \\
                              \mx_i \in [\mx]_i,   \mx_j \in [\mx]_j,
                            \end{array}   }
}{\rm dist}(\mx_i , \mx_j).$$
We say that $\mathbb{X}_N$ is an interval enclosure of a point set $X_N=\{\mx_1,\ldots,\mx_N\}$, denoted as $X_N \in \mathbb{X}_N$, if $\mx_i \in [\mx]_i$ and $\mx_i \notin [\mx]_j$  for $ i = 1 ,\ldots,N$ and $ i \neq j$.

\begin{assumption}\label{Assumption2.1}
Let $\mathbb{X}_N$ defined by (\ref{setX}) be a set of intervals. Assume that
\begin{enumerate}
 \item there exists  a  spherical $t$-design $X_N^0 \in \mathbb{X}_N$;
  \item $\mY(\hat{X}_N)$ is nonsingular.
\end{enumerate}
\end{assumption}

The assumption that $\mY(\hat{X}_N)$ is nonsingular also implies that $\mY(\Xc)$ is a square matrix which requires $N = (t+1)^2$.
In the following theorem we show that under Assumption \ref{Assumption2.1} if ${\rm rad}(\mathbb{X}_N)$ is smaller than a certain number, then $\mY(X_N)$ is nonsingular and (\ref{corollary2_4}) holds for any $X_N \in \mathbb{X}_N$.

\begin{theorem}\label{maintheochap12}
Under Assumption \ref{Assumption2.1},
any point set $X_N \in \mathbb{X}_N$  is a fundamental spherical $t_\epsilon$-design
with
\begin{equation}\label{maintheochap2}
     \frac{2 \tau{\rm rad}(\mathbb{X}_N) \|\mY(\Xc)^{-1}\|_1 }{1 - 4 \tau {\rm rad}(\mathbb{X}_N)\|\mY(\Xc)^{-1}\|_1} \le \epsilon <1,
\end{equation}
if
\begin{equation}\label{theorem3.1}
{\rm rad}(\mathbb{X}_N) < \min \left(\frac14\rho(\mathbb{X}_N), \dfrac{\epsilon}{2(1+2\epsilon)\tau\|\mY(\hat X_N)^{-1}\|_1}\right).
\end{equation}
\end{theorem}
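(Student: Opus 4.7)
The plan is to reduce the theorem to Corollary 2.5 applied to the abstract spherical $t$-design $X_N^0$ whose existence is guaranteed by Assumption 3.1. The two quantities appearing in that corollary, $\rho(X_N^0)$ and $\|\mathbf{Y}(X_N^0)^{-1}\|_1$, both involve the unknown $X_N^0$, whereas the hypothesis of the present theorem only controls the computable $\rho(\mathbb{X}_N)$ and $\|\mathbf{Y}(\Xc)^{-1}\|_1$. The work is therefore to replace the former by the latter using the geometry of the enclosure $\mathbb{X}_N$ together with a matrix perturbation bound.

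First I would compile a Hausdorff distance inventory. Every index-$i$ point of $X_N$, $X_N^0$ and $\Xc$ lies in the same cap $[\mx]_i=\mathcal{C}(\hat{\mx}_i,\gamma_i)$ with $\gamma_i\le{\rm rad}(\mathbb{X}_N)$, and the separation condition ${\rm rad}(\mathbb{X}_N)<\tfrac14\rho(\mathbb{X}_N)$ keeps the caps disjoint by a margin larger than their radius, so the same-index pairing realizes the min/max in the definition of $\sigma$. Triangle inequality inside each cap then yields $\sigma(X_N,\Xc)\le{\rm rad}(\mathbb{X}_N)$, $\sigma(X_N,X_N^0)\le 2\,{\rm rad}(\mathbb{X}_N)$, $\sigma(X_N^0,\Xc)\le 2\,{\rm rad}(\mathbb{X}_N)$, together with $\rho(X_N^0)\ge \rho(\mathbb{X}_N)$ directly from the definition of $\rho(\mathbb{X}_N)$. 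Combining Proposition 2.2 applied to the pair $(X_N^0,\Xc)$ with the standard Neumann-series perturbation bound then shows that $\mathbf{Y}(X_N^0)$ is nonsingular and
$$\|\mathbf{Y}(X_N^0)^{-1}\|_1\le \frac{\|\mathbf{Y}(\Xc)^{-1}\|_1}{1-2\tau\,{\rm rad}(\mathbb{X}_N)\|\mathbf{Y}(\Xc)^{-1}\|_1},$$
provided the denominator is positive, which is guaranteed by the hypothesis on ${\rm rad}(\mathbb{X}_N)$.

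Finally I would invoke Corollary 2.5 with $X_N^0$ as the reference $t$-design. Its first branch $\sigma(X_N,X_N^0)<\tfrac12\rho(X_N^0)$ is met since $2\,{\rm rad}(\mathbb{X}_N)<\tfrac12\rho(\mathbb{X}_N)\le\tfrac12\rho(X_N^0)$. Substituting $\sigma(X_N,X_N^0)\le 2\,{\rm rad}(\mathbb{X}_N)$ together with the inverse-norm bound above into the second branch of the corollary reduces, after clearing denominators, to exactly the standing hypothesis on ${\rm rad}(\mathbb{X}_N)$, and rearranging this inequality in $\epsilon$ produces the asserted lower bound. The main obstacle I anticipate is the bookkeeping: two separate perturbations must be composed—from $\Xc$ to $X_N^0$ (to control $\|\mathbf{Y}(X_N^0)^{-1}\|_1$), and then from $X_N^0$ to $X_N$ (inside Corollary 2.5)—and the clean denominator $1-4\tau\,{\rm rad}(\mathbb{X}_N)\|\mathbf{Y}(\Xc)^{-1}\|_1$ only emerges when the two compositions line up and the coarser estimate $\sigma(X_N^0,\Xc)\le 2\,{\rm rad}(\mathbb{X}_N)$ is used consistently throughout.
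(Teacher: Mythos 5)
Your proposal follows essentially the same route as the paper's proof: the identical $\sigma$/$\rho$ bookkeeping ($\sigma(X_N,X_N^0)\le 2\,{\rm rad}(\mathbb{X}_N)$, $\rho(X_N^0)\ge\rho(\mathbb{X}_N)$), the same Neumann-series (Sun--Stewart) perturbation bound transferring $\|\mathbf{Y}(X_N^0)^{-1}\|_1$ to $\|\mathbf{Y}(\hat{X}_N)^{-1}\|_1$, and the same application of Corollary \ref{sec1coro} with the enclosed exact design $X_N^0$ as reference, with the hypothesis (\ref{theorem3.1}) emerging after clearing denominators exactly as in the paper. The only cosmetic difference is that you obtain the bound (\ref{maintheochap2}) by algebraically rearranging (\ref{theorem3.1}), whereas the paper re-derives it through the explicit weight-perturbation estimate; both give the same conclusion, so the proposal is correct.
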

 \begin{proof}
For any $X_N \in \mathbb{X}_N$ it can be concluded that
$$\rho(X_N) \ge \rho(\mathbb{X}_N). $$
Hence for any two point sets $X_N, X_N' \in \mathbb{X}_N$ we have
$$\sigma (X_N,X_N') \leq  \max_{ 1 \leq i \leq  N} \max_{ \tiny{\mx_i,\my_{i} \in [\mx]_i}} {\rm dist}(\mx_i , \my_{i})  = 2 {\rm rad} (\mathbb{X}_N).$$
First we show that $\mY(X_N)$ is nonsingular for any $X_N \in \mathbb{X}_N$. By Proposition \ref{proposition2.2} we have
\begin{eqnarray*}
% \nonumber to remove numbering (before each equation)
  \|{\rm I} - \mY(\hat X _N)^{-1} \mY(X_N)\|_1 & \le & \| \mY(\hat X _N)^{-1}\|_1 \|  \mY(\hat X _N)- \mY(X_N)\|_1 \\
   & \le & \| \mY(\hat X _N)^{-1}\|_1\tau \sigma(\hat X_N,X_N) \\
   &\le & 2 \tau \| \mY(\hat X _N)^{-1}\|_1 {\rm rad}(\mathbb{X_N}) <1.
\end{eqnarray*}
Hence all the point sets $X_N \in \mathbb{X}_N$ including $X_N^0$ are fundamental systems with order $t$. Moreover, from (\ref{theorem3.1}) we have
$$\sigma (X_N,X_N^0)   < \frac12 \rho(X_N^0).$$
By (\ref{theorem3.1}) we can also have
$$2(1+\epsilon) \tau {\rm rad}(\mathbb{X}_N) \|\mY(\hat X_N)^{-1}\|_1 < \epsilon - 2 \epsilon   \tau{\rm rad}(\mathbb{X}_N) \|\mY(\hat X_N)^{-1}\|_1.$$
By Corollary 2.7 in \cite[pp.119]{Sun1990matrix} it can be concluded that for arbitrary $X_N \in \mathbb{X}_N$ we have
\begin{equation}\label{corollary2_7}
\|\mY(X_N)^{-1}\|_{1}
\leq \dfrac{ \|\mY(\hat X_N)^{-1}\|_1  } {  1 - \|\mY(\hat X_N)^{-1}\|_1
	\|\mY(\hat{X}_N)-\mY(X_N)\|_{1} }.
\end{equation}
Then together with Proposition \ref{proposition2.2}  we have
\begin{eqnarray}
\nonumber\sigma(X_N,X_N^0) & \le & 2 {\rm rad}(\mathbb{X}_N)\\
\nonumber& < & \frac{\epsilon(1 -  2\tau{\rm rad}(\mathbb{X}_N)\|\mY(\hat X_N)^{-1}\|_1)}{(1+\epsilon)  \tau \|\mY(\hat X_N)^{-1}\|_1} \\
\nonumber& \le & \frac{\epsilon(1 -  \|\mY(\hat X_N)^{-1}\|_1 \|\mY(\hat X_N) - \mY(X_N^0)\|_1)}{(1+\epsilon) \tau \|\mY(\hat X_N)^{-1}\|_1}\\
& \le & \frac{\epsilon}{(1+\epsilon) \tau \|\mY(X_N^0)^{-1}\|_1}.
\end{eqnarray}
By Corollary \ref{sec1coro} we have that any point set $X_N \in \mathbb{X}_N$ is a fundamental spherical $t_\epsilon$-design.
Additionally, together with
\begin{eqnarray*}
	% \nonumber to remove numbering (before each equation)
	\|\mw-\frac{4\pi}{N}\me\|_\infty & \leq &  \frac{4\pi}{N} \frac{\|\mY(X_N^0)^{-1}\|_1 \|\mY(X_N^0) - \mathbf{Y}(X_N)\|_1}{1-\|\mY(X_N^0) - \mathbf{Y}(X_N)\|_1\|\mY(X_N^0)^{-1}\|_1} \\
	& \le &   \frac{4\pi}{N}\frac{2\tau{\rm rad}(\mathbb{X}_N)\|\mY(X_N^0)^{-1}\|_1 }{1-2\tau{\rm rad}(\mathbb{X}_N)\|\mY(X_N^0)^{-1}\|_1 },
\end{eqnarray*}
and (\ref{corollary2_7}) we have
\begin{eqnarray*}
& & \frac{2\tau{\rm rad}(\mathbb{X}_N)\|\mY(X_N^0)^{-1}\|_1 }{1-2\tau{\rm rad}(\mathbb{X}_N)\|\mY(X_N^0)^{-1}\|_1 }\\
& \le & \frac{2\tau{\rm rad}(\mathbb{X}_N)\dfrac{ \|\mY(\hat X_N)^{-1}\|_1  } {  1 - \|\mY(\hat X_N)^{-1}\|_1
		\|\mY(\hat{X}_N)-\mY(X_N)\|_{1} } }{1-2\tau{\rm rad}(\mathbb{X}_N)\dfrac{ \|\mY(\hat X_N)^{-1}\|_1  } {  1 - \|\mY(\hat X_N)^{-1}\|_1
		\|\mY(\hat{X}_N)-\mY(X_N)\|_{1} } }\\
& \le & \frac{2\tau{\rm rad}(\mathbb{X}_N)\|\mY(\hat X_N)^{-1}\|_1 }{1-4\tau{\rm rad}(\mathbb{X}_N)\|\mY(\hat X_N)^{-1}\|_1 }.
\end{eqnarray*}
Then we have that any point set $X_N \in \mathbb{X}_N$ is a spherical $t_\epsilon$-design with
\begin{equation*}
\frac{2 \tau{\rm rad}(\mathbb{X}_N) \|\mY(\Xc)^{-1}\|_1 }{1 - 4 \tau {\rm rad}(\mathbb{X}_N)\|\mY(\Xc)^{-1}\|_1} \le \epsilon <1,
\end{equation*}
under the  assumption of ${\rm rad}(\mathbb{X}_N)$.
\end{proof}

Theorem \ref{maintheochap12} proves that an arbitrarily chosen point set in a set of interval enclosures of a fundamental spherical $t$-design is a spherical $t_\epsilon$-design if ${\rm rad}(\mathbb{X}_N)$ is smaller than a certain number.
In the theorem we discuss the interval enclosures defined by spherical caps as $[\mx]_i = \mathcal{C}(\hat\mx_i,\gamma_i) = \{ \mx\in \St | \cos^{-1}(\mx\cdot\hat\mx_i)\le \gamma_i\}$ for which many nice properties of real spherical harmonics can be adopted.
However, in practice, to reduce the spherical constraint of points and the dimension of variables, the spherical coordinate form of the points are preferable to compute the interval enclosures, see \cite{chen2011computational}.
For a point $\mx_i \in X_N \subset \St$, denote $\theta_i$, $\varphi_i$ as its spherical coordinate.
Then in \cite{chen2011computational} a series of intervals $[\theta]_i  = [\ubar{\theta}_i,\bar{\theta}_i]$, $ [\varphi]_i = [\ubar{\varphi}_i,\bar{\varphi}_i]$  are computed such that $\mathbb{Z}_N= \{[\mz]_1,\ldots,[\mz]_N\}$ is a set of  interval enclosures of a \textit{well-conditioned spherical $t$-design} \cite{AN_CHEN}, in which each element in $\mathbb{Z}_N$ is defined by
 \begin{equation}\label{Z}
 [\mz]_i = \left(
             \begin{array}{c}
               \sin([\theta]_i)\cos([\varphi]_i) \\
               \\
               \sin([\theta]_i)\sin([\varphi]_i) \\
               \\
               \cos([\theta]_i) \\
             \end{array}
           \right), \quad i = 1,\ldots,N.
\end{equation}
In this sense, different from the interval enclosures defined by the spherical caps, each interval enclosure computed in \cite{chen2011computational} is a rectangle as $[\theta]_i \times [\varphi]_i$.
Therefore, there remains a gap between real computation of interval enclosures of spherical $t$-designs and our analysis above.
Naturally, a strategy to overcome this gap is that for each spherical rectangle in \cite{chen2011computational} we construct a spherical cap which is as small as possible to cover it.
For the spherical rectangle $[\theta]_i \times [\varphi]_i = [\ubar{\theta}_i,\bar{\theta}_i] \times [\ubar{\varphi}_i,\bar{\varphi}_i]$ its four vertices can be written as
\begin{align*}
 \mx_{i,1} = \left(
             \begin{array}{c}
               \sin(\ubar{\theta}_i)\cos(\ubar{\varphi}_i) \\\\
               \sin(\ubar{\theta}_i)\sin(\ubar{\varphi}_i) \\\\
               \cos(\ubar{\theta}_i) \\
             \end{array}
           \right), \quad
                      \mx_{i,2} = \left(
             \begin{array}{c}
               \sin(\ubar{\theta}_i)\cos(\bar{\varphi}_i) \\\\
               \sin(\ubar{\theta}_i)\sin(\bar{\varphi}_i) \\\\
               \cos(\ubar{\theta}_i) \\
             \end{array}
           \right),
           \\\\
            \mx_{i,3} = \left(
             \begin{array}{c}
               \sin(\bar{\theta}_i)\cos(\bar{\varphi}_i) \\\\
               \sin(\bar{\theta}_i)\sin(\bar{\varphi}_i) \\\\
               \cos(\bar{\theta}_i) \\
             \end{array}
           \right), \quad
                      \mx_{i,4} = \left(
             \begin{array}{c}
               \sin(\bar{\theta}_i)\cos(\ubar{\varphi}_i) \\\\
               \sin(\bar{\theta}_i)\sin(\ubar{\varphi}_i) \\\\
               \cos(\bar{\theta}_i) \\
             \end{array}
           \right).
\end{align*}
It can be shown that there exists a point $\hat{\mx}_i$ defined by $ [\theta , \varphi] \in [\theta]_i \times [\varphi]_i$ satisfying
 \begin{equation}\label{equal_distance}
   {\rm dist}(\hat{\mx}_{i}, \mx_{i,j}) = {\rm dist}(\hat{\mx}_i, \mx_{i,k}) \quad {\rm for } \ j, k = 1,2,3,4
 \end{equation}
and $ [\mz]_i \subseteq \mathcal{C}(\hat{\mx}_i,\gamma_i)$ with $\gamma_i = {\rm dist}(\hat{\mx}_i,\mx_{i,1} )$.
However, computing such point $\hat \mx_i$ is
time-consuming and imports large round-off errors when the radii of interval enclosures are small.
Instead of computing $\hat\mx_i$, we investigate another strategy to compute the spherical caps to cover  spherical rectangles which is coarser but more practical.
For a spherical coordinate interval $[\mz]_i$, we use the center point  of the interval $ [\ubar{\theta}_i,\bar{\theta}_i] \times [\ubar{\varphi}_i,\bar{\varphi}_i]$ to define a point  as
\begin{equation}\label{xcs}
 \tilde{\mx}_i = \left(
             \begin{array}{c}
               \sin(\frac12(\bar{\theta}_i+\ubar{\theta}_i))\cos(\frac12(\bar{\varphi}_i+\ubar{\varphi}_i)) \\\\
               \sin(\frac12(\bar{\theta}_i+\ubar{\theta}_i))\sin(\frac12(\bar{\varphi}_i+\ubar{\varphi}_i)) \\\\
               \cos(\frac12(\bar{\theta}_i+\ubar{\theta}_i)) \\
             \end{array}
           \right).
\end{equation}
 Note that the spherical coordinate of $\tilde{\mx}_i$ is the center point of the interval $[\theta]_i \times [\varphi]_i$ but itself is not necessary to be the center point of $[\mz]_i$ in the form of the spherical coordinate. Still, we have that
\begin{equation}\label{xcs_distance}
  {\rm dist}(\tilde{\mx}_i,\mx_{i,1}) =  {\rm dist}(\tilde{\mx}_i,\mx_{i,2}),\quad  {\rm dist}(\tilde{\mx}_i,\mx_{i,3}) =  {\rm dist}(\tilde{\mx}_i,\mx_{i,4}).
\end{equation}
It is obvious to obtain that the distance between $\tilde{\mx}_i$ and any point in $[\mz]_i$ does not exceed the maximum of the four distances in (\ref{xcs_distance}). Therefore, if we let
\begin{equation}\label{gamma_cs}
  \gamma_i = \max\{ {\rm dist}(\tilde{\mx}_i,\mx_{i,1})\ ,\  {\rm dist}(\tilde{\mx}_i,\mx_{i,3})\},
\end{equation}
then we  have
\begin{equation}\label{z_cs}
[\mz]_i \subseteq \mathcal{C}(\tilde{\mx}_i,\gamma_i).
\end{equation}
Consequently, we call the set of spherical caps
$$
\tilde{\mathbb{X}}_N = \{ \mathcal{C}(\tilde{\mx}_i,\gamma_i) \}$$
as a \emph{cap-cover of $\mathbb{Z}_N$}
with $\tilde\mx_i$, $\gamma_i$ defined in (\ref{xcs}) and (\ref{gamma_cs}).
Similar with set of spherical caps $\mathbb{X}_N$, we define the radius and separation distance of $\mathbb{Z}_N$ by
\begin{equation}\label{radius_z}
{\rm rad}(\mathbb{Z}_N) = \max_{1\leq i\leq N} \big\{ \max \{ {\rm dist}(\tilde{\mx}_i,\mx_{i,1})\ ,\  {\rm dist}(\tilde{\mx}_i,\mx_{i,3}) \}\big\} = {\rm rad}(\tilde{\mathbb{X}}_N),
\end{equation}
and
\begin{equation}\label{z_rho}
  \rho(\mathbb{Z}_N) = \min_{\tiny {\begin{array}{c}
                               i \neq j \\
                              1 \leq i,j \leq N
                            \end{array}   }} \{ {\rm dist}(\tilde{\mx}_i,\tilde{\mx}_j)- \gamma_i-\gamma_j \} = \rho(\tilde{\mathbb{X}}_N).
\end{equation}
And then we have the following corollary for the lower bound of $\epsilon$ for $\mathbb{Z}_N$.

\begin{corollary}\label{2nd_corollary}
Let $\mathbb{Z}_N$ be a set of spherical rectangle and its cap-cover $\tilde{\mathbb{X}}_N$ satisfy Assumption \ref{Assumption2.1}.
Then any point set $X_N \in \mathbb{Z}_N$  is a fundamental spherical $t_\epsilon$-design
with
\begin{equation}\label{lowerbound_rectangle}
\frac{2 \tau{\rm rad}(\mathbb{X}_N) \|\mY(\tilde X_N)^{-1}\|_1 }{1 - 4 \tau {\rm rad}(\mathbb{X}_N)\|\mY(\tilde X_N)^{-1}\|_1} \le \epsilon <1,
\end{equation}
if
\begin{equation}
{\rm rad}(\mathbb{Z}_N) < \min \left(\frac14\rho(\mathbb{Z}_N), \dfrac{\epsilon}{2(1+2\epsilon)\tau\|\mY(\tilde X_N)^{-1}\|_1}\right).
\end{equation}
\end{corollary}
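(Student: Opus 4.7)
The plan is to deduce Corollary \ref{2nd_corollary} as a direct consequence of Theorem \ref{maintheochap12} applied to the cap-cover $\tilde{\mathbb{X}}_N$ rather than to a generic set of spherical caps. The covering inclusion (\ref{z_cs}) says $[\mz]_i \subseteq \mathcal{C}(\tilde{\mx}_i,\gamma_i)$ for every $i$, so every point set $X_N \in \mathbb{Z}_N$ also belongs to $\tilde{\mathbb{X}}_N$. Moreover, the definitions (\ref{radius_z}) and (\ref{z_rho}) supply the identities ${\rm rad}(\tilde{\mathbb{X}}_N) = {\rm rad}(\mathbb{Z}_N)$ and $\rho(\tilde{\mathbb{X}}_N) = \rho(\mathbb{Z}_N)$, so the hypotheses and conclusions of Theorem \ref{maintheochap12} translate transparently between the two settings.

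First I would verify that Assumption \ref{Assumption2.1} holds for $\tilde{\mathbb{X}}_N$ (which is part of the statement): the spherical $t$-design promised by the hypothesis lies in $\mathbb{Z}_N$ and hence in $\tilde{\mathbb{X}}_N$, and the nonsingularity of $\mY(\tilde X_N)$ is exactly the second part. Next I would substitute ${\rm rad}(\tilde{\mathbb{X}}_N) = {\rm rad}(\mathbb{Z}_N)$ and $\rho(\tilde{\mathbb{X}}_N) = \rho(\mathbb{Z}_N)$ into (\ref{theorem3.1}), which shows that the radius hypothesis stated in the corollary is precisely the radius hypothesis of Theorem \ref{maintheochap12} for the cap-cover. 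Applying Theorem \ref{maintheochap12} then yields that every $X_N \in \tilde{\mathbb{X}}_N$, and in particular every $X_N \in \mathbb{Z}_N$, is a fundamental spherical $t_\epsilon$-design whose $\epsilon$ satisfies (\ref{maintheochap2}) with $\hat X_N$ replaced by $\tilde X_N$. This is exactly the bound (\ref{lowerbound_rectangle}).

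The only subtlety worth flagging is that $\tilde{\mx}_i$ defined in (\ref{xcs}) is the midpoint of the spherical-coordinate rectangle $[\theta]_i\times[\varphi]_i$ and not the true geodesic centre of $[\mz]_i$, so the enclosing cap $\mathcal{C}(\tilde{\mx}_i,\gamma_i)$ may be strictly larger than the minimal cap covering $[\mz]_i$. This does not hurt the argument, since Theorem \ref{maintheochap12} only needs the covering property together with the metric identities above; it is indifferent to whether $\tilde{\mx}_i$ is geometrically optimal. The main obstacle is therefore purely one of bookkeeping, namely making sure that the centre $\tilde{\mx}_i$ plays the same role in the proof of Theorem \ref{maintheochap12} that $\hat{\mx}_i$ played originally, and that the perturbation bound in Proposition \ref{proposition2.2} is applied with $\sigma(\tilde X_N, X_N)\le 2\,{\rm rad}(\mathbb{Z}_N)$. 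No new estimate is required, and the corollary follows.
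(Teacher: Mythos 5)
Your proposal is correct and is essentially the paper's own argument: the paper simply remarks that the corollary "follows the same manner of Theorem \ref{maintheochap12}", i.e.\ the theorem's reasoning is run on the cap-cover $\tilde{\mathbb{X}}_N$ with $\hat X_N$ replaced by $\tilde X_N$, using $[\mz]_i \subseteq \mathcal{C}(\tilde{\mx}_i,\gamma_i)$ and the identities ${\rm rad}(\mathbb{Z}_N)={\rm rad}(\tilde{\mathbb{X}}_N)$, $\rho(\mathbb{Z}_N)=\rho(\tilde{\mathbb{X}}_N)$ from (\ref{radius_z})--(\ref{z_rho}), exactly as you describe.
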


The proof of the corollary follows the same manner of Theorem \ref{maintheochap12}.

Based on Corollary \ref{2nd_corollary}, the lower bounds of $\epsilon$, denoted by $\ubar{\epsilon}$, for the sets of interval enclosures provided in \cite{chen2011computational} can be computed.
The data containing the sets of interval enclosures for the parameterization of the spherical $t$-designs and relative programs can be downloaded from the website\\
http://www-ai.math.uni-wuppertal.de/SciComp/SphericalTDesigns.
The computational results are shown in Fig 3.1 and Table 3.1.

\begin{figure}[!h]\label{epsifig}
  \centering
  % Requires \usepackage{graphicx}
  \includegraphics[width=3.5in]{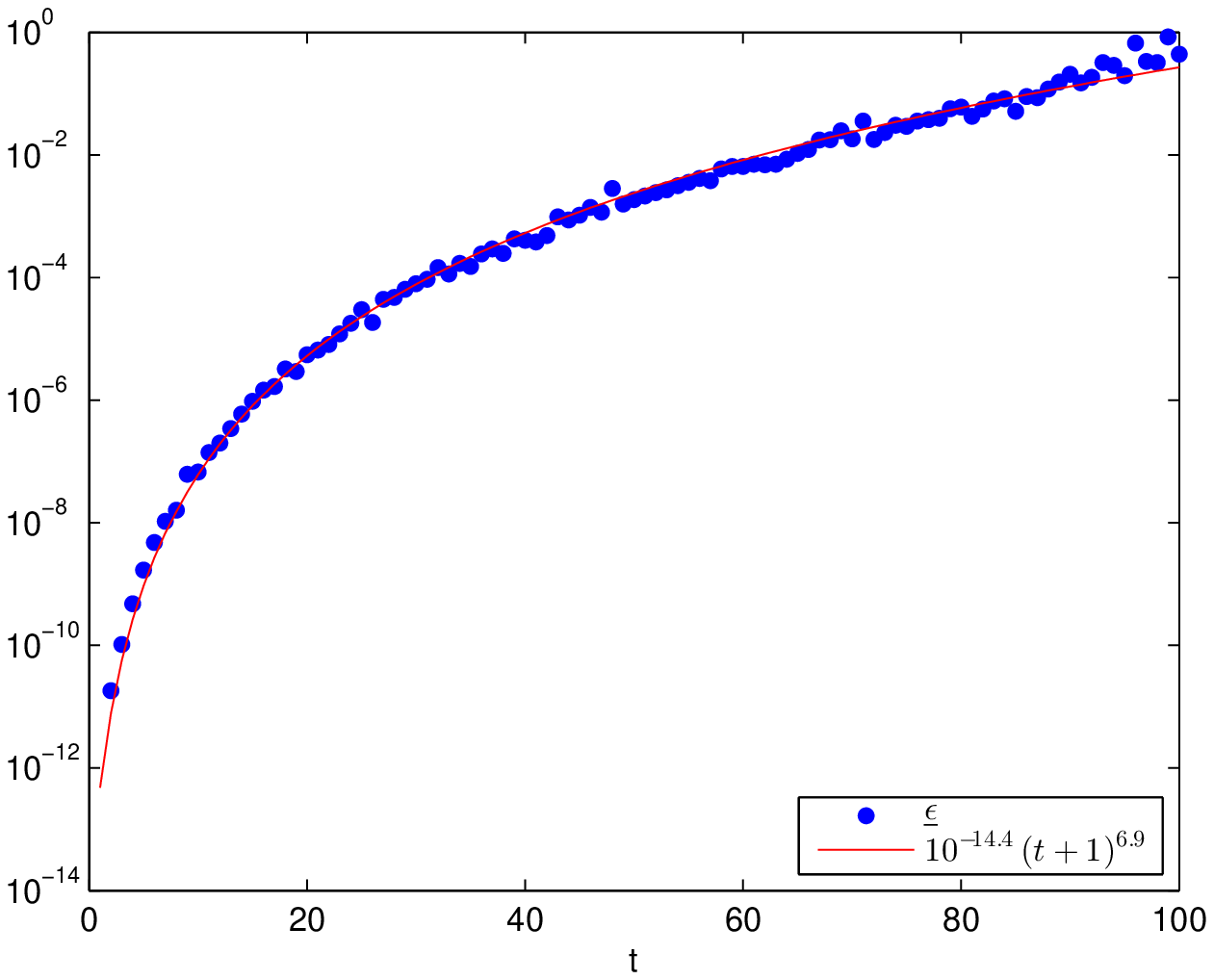}\\
  \caption{$\ubar{\epsilon}$ for $t = 2,\ldots,100$}\label{plotallepsi}
\end{figure}

In Fig. \ref{epsifig}.1, we report the lower bounds of $\epsilon$, denoted as $\ubar{\epsilon}$, for sets of interval enclosures computed in \cite{chen2011computational} for $t = 2,\ldots,100$ (for $t =1$ we have known that the regular tetrahedron is a spherical $t$-design so that $\epsilon = 0$ and we would not consider this case here), based on formula (\ref{lowerbound_rectangle}). In this figure we also plot a function
  \begin{equation}\label{approx_epsi_function}
   y = 10 ^{-14.4}(t+1)^{6.9},
\end{equation}
to approximately estimate the track of $\ubar{\epsilon}$ with respect to $t$.
From the figure we can conclude that the  lower bound of $\epsilon$ grows with  the increase of $t$ in an order about $6.9$. Additionally, by (\ref{lowerbound_rectangle}) it is known that the norm of $\mY(\tilde X_N)^{-1}$ is also very important in the process of estimating $\ubar{\epsilon}$. Fortunately, since the sets of interval enclosures computed in \cite{chen2011computational} seek to include well-conditioned spherical $t$-designs \cite{AN_CHEN}, the growth of lower bounds of $\epsilon$ keeps stable for all the $t$ considered here.

\begin{table}[H]\label{epsitable}\begin{center}\caption{Information for sets of interval enclosures $\mathbb{Z}_N$ for some selected $t$}
\renewcommand\arraystretch{1.1}
\addtolength{\tabcolsep}{3pt}
\begin{tabular}{c|c|c|c|c}
\hline\hline\noalign{\smallskip}
  % after \\: \hline or \cline{col1-col2} \cline{col3-col4} ...
  $t$ & rad($\mathbb{Z}_N$) & $\rho(\mathbb{Z}_N)$ & $\ubar{\epsilon}$ & $\epsilon$ for $\tilde{X}_N$\\
  \hline
10 & 1.843454e-12 & 3.396362e-01 & 6.748890e-08 & 6.694645e-14 \\
20 & 1.515848e-11 & 1.805783e-01 & 5.480524e-06  & 1.783018e-13 \\
30 & 5.588085e-11 & 1.249714e-01 & 7.888659e-05  & 2.480238e-13 \\
40 & 1.044163e-10 & 9.203055e-02 & 4.043164e-04  & 5.339063e-13 \\
50 & 2.199182e-10 & 7.638945e-02 & 1.862348e-03  & 5.057066e-13 \\
60 & 4.006638e-10 & 6.302748e-02 & 6.502352e-03  & 6.747935e-13 \\
70 & 6.143914e-10 & 5.421869e-02 & 1.820130e-02  & 8.820722e-13 \\
80 & 1.220430e-09 & 4.771142e-02 & 6.050880e-02  & 1.151368e-12 \\
90 & 2.089473e-09 & 4.264961e-02 & 2.066649e-01  & 1.228462e-12 \\
100 & 2.273791e-09 & 3.846343e-02 & 4.420562e-01 & 1.880540e-12 \\
\noalign{\smallskip}
  \hline\hline
\end{tabular}
\end{center}
\end{table}

We also report some information of the interval enclosures and their theoretical lower bounds of $\epsilon$ for some selected $t$  in Table 3.1. Not only $\ubar{\epsilon}$ but also the radii and separation distances of $\mathbb{X}_N$ are also shown in the table.
We can see that the radius of each interval enclosure is far smaller than their separation distance, which means that the assumptions in the above lemmas and theorems are satisfied.
For a fixed $t$, the set of center points for each interval $([\theta]_i,[\phi]_i) \in \mathbb{Z}_N$, $i = 1,\ldots,N$, denoted as $\tilde{X}_N = \{ \tilde{\mx}_{1}\ldots,\tilde{\mx}_{N}\}$, with $\tilde{\mx}_{i}$ defined by (\ref{xcs}), can be regarded as an approximation of a fundamental spherical $t$-design.
For each new point set $\tilde{X}_N$ we compute  $\epsilon$ by (\ref{lowerbound_rectangle}) so that the equality holds exactly. As shown in the table, the values of  $\epsilon$ for each $\tilde{X}_N$ are all very small positive numbers, and grow  with the increase of $t$. This means that $\tilde{X}_N$ which is selected properly from the interval enclosures is a spherical $t_\epsilon$-design.

\begin{remark}
To obtain an approximate spherical $t$-design as accurate as possible, in \emph{\cite{chen2011computational}} the radius of the set of interval enclosures ${\rm rad}(\mathbb{Z}_N)$ is computed to a small scale around $10^{-10}$.
With the introduction of the concept spherical $t_\epsilon$-designs,  it has been shown that any point set selected in $\mathbb{Z}_N$ is a fundamental spherical $t_\epsilon$-design with small enough ${\rm rad}(\mathbb{Z}_N)$ and $\ubar{\epsilon}$ is increasing in  ${\rm rad}(\mathbb{Z}_N)$.
As a result, to reduce the difficulty of computing $\mathbb{Z}_N$, one may relax ${\rm rad}(\mathbb{Z}_N)$ to a larger scale with keeping $\ubar{\epsilon} < 1$ holding. Then all point sets selected in $\mathbb{Z}_N$ are still fundamental spherical $t_\epsilon$-designs with $ 0 \le \epsilon <1$.
\end{remark}

\section{Worst-case errors of quadrature rules using spherical $t_\epsilon$-designs}

In this section we will investigate the worst-case errors for quadrature rules with algebraic accuracy $t$ using spherical $t_\epsilon$-designs in Sobolev spaces which are finite-dimensional rotationally invariant subspaces of $C(\mathbb{S}^2)$.
The bizonal reproducing kernel will be used in the analysis,  which has been wildely applied to analyze approximations on the sphere \cite{brauchart2014qmc,brauchart2007numerical,hesse2005worst,hesse2006cubature,wendland}.
About equal weight quadrature rules, Brauchart et al \cite{brauchart2014qmc} recently develop a way to compute their worst-case errors in Sobolev spaces. In this section, we intend to extend their method to non-equal but still positive weight quadrature rules and show the performance of spherical $t_\epsilon$-designs in numerical integration.

In this section we follow notations and definitions from \cite{brauchart2014qmc}.
Denote the space of square integrable functions on $\mathbb{S}^2$ by $\mathbb{L}_2(\mathbb{S}^2)$. Then it is a Hilbert space with the inner product
\begin{equation}\label{L2_1}
  \langle f,g \rangle _{\mathbb{L}_2(\mathbb{S}^d)} = \int_{\mathbb{S}^2}f(\mx)g(\mx){\rm d}\omega(\mx), \quad f,g \in \mathbb{L}_2(\mathbb{S}^2),
\end{equation}
and the induced norm as
\begin{equation}\label{L2_2}
  \|f\|_{\mathbb{L}_2(\mathbb{S}^2)} = \left( \int_{\mathbb{S}^2}|f(\mx)|^2{\rm d }\omega(\mx) \right)^{\frac12}, \quad f \in \mathbb{L}_2(\mathbb{S}^2).
\end{equation}
The Sobolev space $\mathbb{H}^s(\mathbb{S}^2)$ can be defined for $s\geq 0$ as the set of all functions $f \in \mathbb{L}_2(\mathbb{S}^2)$ with whose Laplace-Fourier coefficients
\begin{equation}\label{sobolev1}
  \hat{f}_{\ell,k} = \left\langle f , Y_{\ell,k} \right\rangle_{\mathbb{L}_2(\mathbb{S}^2)} = \int_{\mathbb{S}^2}f(\mx)Y_{\ell,k}(\mx){\rm d}\omega(\mx),
\end{equation}
satisfying
 \begin{equation}\label{sobolev2}
   \sum_{\ell = 0}^{\infty} \sum_{k = 1}^{2\ell +1}(1+\lambda_\ell)^s \left| \hat{f}_{\ell,k} \right|^2 < \infty,
 \end{equation}
where $\lambda_\ell = \ell (\ell + 1 )$. Obviously, by letting $s = 0$ we can obtain $\mathbb{H}^0(\mathbb{S}^2) = \mathbb{L}_2(\mathbb{S}^2)$.
Then the norm of $\Hsst$ can be defined as
\begin{equation}\label{norm_s}
  \|f\|_{\mathbb{H}^s} =\left[ \sum_{\ell = 0}^{\infty}\sum_{k =1}^{2\ell+1}\frac{1}{\alpha_\ell^{(s)}} \hat f_{\ell,k} ^2\right]^{\frac12},
\end{equation}
where the sequence of positive parameters $\alpha_\ell^{(s)}$ satisfies
\begin{equation}\label{alpha_ls_bound}
  \alpha_\ell^{(s)} \sim (1+\lambda_\ell)^{-s} \sim (\ell+1)^{-2s}.
\end{equation}
Correspondingly, the inner product of $\mathbb{H}^s(\mathbb{S}^2)$ can be defined as
\begin{equation}\label{inner_prod}
\langle f,g \rangle _{\mathbb{H}^s}  = \sum_{\ell = 0}^{\infty}\sum_{k = 1}^{2\ell+1} \frac{1}{\alpha_\ell^{(s)}} \hat f_{\ell,k} \hat g_{\ell,k}.
\end{equation}
For a point set $X_N$ and a weight vector $\mw$, we define the numerical quadrature rule and the integral of a function $f$ on $\St$ as
\begin{equation}\label{cubature_integral_epsi}
  Q[X_N,\mw](f):= \sum_{j =1}^{N}\frac {w_j} {4 \pi}f(\mx_j), \quad I(f) := \int_{\mathbb{S}^2}f(\mx){\rm d}\omega(\mx),
\end{equation}
The worst-case error of the quadrature rule $Q[X_N,\mw]$ on $\Hsst$ can be defined as \cite{brauchart2014qmc,hesse2005worst}
\begin{equation}\label{cubature_integral_epsi1}
E_{s}(Q[X_N,\mw]) := \sup\big\{ \left|Q[X_N,\mw](f) - I(f) \right|:f\in \Hsst, \|f\|_{\mathbb{H}^s} \leq 1 \big\}.
\end{equation}
The Riesz representation theorem and the additional theorem assure the existence of a reproducing kernel of
\begin{eqnarray}\label{Ks}
% \nonumber to remove numbering (before each equation)
 \nonumber K_s(\mx,\my) &=& \sum_{\ell = 0}^{\infty}(2\ell+1)\alpha_\ell^{(s)}P_\ell(\mx \cdot \my) \\
   &=& \sum_{\ell = 0}^{\infty}\sum_{k = 1}^{2\ell+1} \alpha_\ell^{(s)} Y_{\ell,k}(\mx) Y_{\ell,k}(\my).
\end{eqnarray}
Together with the property of reproducing kernel $K_s(\cdot,\cdot)$ defined in (\ref{Ks}) and the addition theorem,  it is shown in \cite{hesse2005worst} that
\begin{eqnarray*}
% \nonumber to remove numbering (before each equation)
  \big(E_s(Q[X_N,\mw])\big)^2  &=& \left[ \sup_{\mbox{\tiny$\begin{array}{c}
f \in \Hsst\\
\|f\|_{\mathbb{H}^s} \leq 1\\
\end{array}$}} \left|Q[X_N,\mw](f) -I(f)\right|\right]^2 \\
   &=&  \left\| \sum_{i = 1}^{N} \frac {w_i} {4 \pi} K_s(\cdot,\mx)  -  \Is K_s(\cdot,\mx) {\rm d}\omega(\mx) \right\|_{\mathbb{H}^s}^2.
\end{eqnarray*}
Assume that $Q[X_N,\mw]$ has algebraic accuracy $t$. Then with the following equality
\begin{equation*}
  \Is K_s(\mx,\cdot)d\omega(\mx) = \alpha_0^{(s)},
\end{equation*}
the worst-case error could be reformulated as
\begin{eqnarray}
% \nonumber to remove numbering (before each equation)
 \nonumber (E_s(Q[X_N,\mw]))^2  &=&  \left[\sum_{\ell=1}^{\infty}\sum_{k=1}^{2\ell+1}\alpha_\ell^{(s)}\left(\sum_{i=1}^{N}\frac {w_i} {4 \pi}Y_{\ell,k}(\mx_i)\right)^2\right]\\
 %\nonumber &=& \sumlo \sumk \aals \sum_{i=1}^N \sum_{j=1}^N \frac{w_iw_j}{16\pi^2}Y_{\ell,k}(\mx_i)Y_{\ell,k}(\mx_j)\\
 &=& \sum_{i=1}^N \sum_{j=1}^N \frac{w_iw_j}{16\pi^2} \sumlo \sumk \aals Y_{\ell,k}(\mx_i)Y_{\ell,k}(\mx_j).
\end{eqnarray}
Reproducing kernels for $\Hsst$ for $s > 1$ can be constructed utilizing powers of distances, provided the power $2s - 2$ is not an even integer. Indeed, it is known (cf., e.g., \cite{baxter2001radial,brauchart2007numerical}) that the signed power of the distance, with sign $(-1)^{L+1}$ with $L:=L(s):= \lfloor s - 1 \rfloor $,
has the following Laplace-Fourier expansion
\begin{equation}\label{lap_fou_expansion}
  (-1)^{L+1} |\mx -\my|^{2s-2} = (-1)^{L+1} V_{2-2s}(\St) + \sum_{\ell =1}^{\infty} a_{\ell}^{(s)} (2\ell +1)P_\ell (\mx \cdot \my),
\end{equation}
where
\begin{equation}\label{V_d2s}
  V_{2-2s} (\St) := \int_{\St} \int_{\St} |\mx -\my |^{2s-2} {\rm d}\omega(\mx){\rm d}\omega{(\my)} = 2^{2s-1} \dfrac{\Gamma(3/2)\Gamma(s)}{\sqrt{\pi}\Gamma(1+s)},
\end{equation}
\begin{equation}\label{alpha_ls}
  a_\ell^{(s)} := V_{2-2s}(\St) \dfrac{(-1)^{L+1}(1-s)_\ell}{(1+s)_\ell}, \ \ell \geq 1,
\end{equation}
and
$$
\frac{(1-s)_\ell}{(1+s)_\ell} := \frac {\Gamma(1+s)}{\Gamma(1-s)}\frac {\Gamma(\ell+1-s)}{\Gamma(\ell+1+s)} \sim \frac {\Gamma(1-s)}{\Gamma(1+s)} \ell^{-2s} \sim \ell^{-2s}.
 $$
Thus we have
\begin{eqnarray}\label{dis_reproducingkernel}
% \nonumber to remove numbering (before each equation)
  (-1)^{L+2}(V_{2-2s}(\St)-|\mx-\my|^{2s-2}) &=& \sumlo \als (2\ell+1)P_\ell(\mx\cdot\my) \\
   &=&  \sumlo \als \sumk Y_{\ell,k}(\mx)Y_{\ell,k}(\my).
\end{eqnarray}
Note that for $\als$ we have
\begin{equation}\label{als_eq}
  \als \sim 2^{2s-1} \dfrac{\Gamma(\frac{3}2)\Gamma(s)}{\sqrt{\pi}(-1)^{L+1}\Gamma(1 +s)}\ell^{-2s} \quad {\rm as} \  \ell \to \infty,
\end{equation}
and when $1< s \leq 2$, which means $L = L(s) =0$, we have $\als > 0 $ for all $\ell = 1,\ldots$. Therefore, we regard the left hand side of (\ref{dis_reproducingkernel}) as the reproducing kernel of $\Hsst$, which is
$$
  K_s(\mx,\my) = V_{2-2s}(\St)-|\mx-\my|^{2s-2},$$
   and then we obtain
\begin{equation}\label{E_s_small}
  (E_s(Q[X_N,\mw]))^2 = \sum_{i=1}^N \sum_{j=1}^N \frac{w_iw_j}{16\pi^2} (V_{2-2s}(\St)-|\mx_i-\mx_j|^{2s-2}).
\end{equation}
For the case $s > 2$, we know that $\als >0$ does not hold for all $\ell = 1,\ldots$. In this situation,  we let
 $$
 K_s(\mx,\my) = (1-(-1)^{L+1})V_{2-2s}(\St) + \mathcal{Q}_{L}(\mx \cdot \my) + (-1)^{L+1} |\mx - \my|^{2s-2},
   $$
 with
 $$
\mathcal{Q}_{L}(\mx \cdot \my) : = \sum_{\ell=1}^{L} ((-1)^{L+1-\ell}-1)a_\ell^{(s)}(2\ell+1)P_\ell(\mx\cdot \my), \quad \mx,\my \in \St,
$$
which changes the signs if the negative coefficients $\als$ in (\ref{lap_fou_expansion}). Hence the worst-case error on $\Hsst$ with $s>2$ can be represented as
\begin{equation}\label{E_s_large}
\begin{aligned}
( E_s(Q[X_N,\mw]))^2 =  \sum_{i=1}^N \sum_{j=1}^N \frac{w_iw_j}{16\pi^2} &\left( \mathcal{Q}_{L}(\mx_i \cdot \mx_j) + (-1)^{L+1} |\mx_i - \mx_j|^{2s-2} \right.\\
& \left. -(-1)^{L+1}V_{2-2s}(\St) \right).\\
\end{aligned}
\end{equation}

In what follows we will compute the worst-case errors of quadrature rules using spherical $t_\epsilon$-designs with algebraic accuracy $t$. In this experiment we choose $\epsilon =0.1$ for spherical $t_\epsilon$-designs and use (\ref{Yw}) to find a spherical $t_\epsilon$-design, which is a system of nonlinear equations. The system can be solved by minimizing its least squares form using a smoothing trust-region filter method proposed in \cite{annual14}.
Note that the number of points needed for constructing spherical $t_\epsilon$-designs may decrease with the increase of $\epsilon$.
Thus in the computation of spherical $t_\epsilon$-designs we always attempt to find the one with  a possible minimal number of points, denoted as $N(t,\epsilon)$.
The detailed process for finding spherical $t_\epsilon$-designs can be found in \cite{annual14}.
In the numerical test of computation of spherical $t_\epsilon$-designs it is found that a possible minimal number of points
satisfies $\lceil(t+1)^2/3\rceil +1\le N(t,\epsilon) \le \lceil(t+2)^2/2\rceil+1$.
In the numerical test in current and next sections, the spherical $t_\epsilon$-designs are chosen with $N = N(t,\epsilon)$.

\begin{figure}[!h]\label{wce_fig}
	\centering
	\subfigure[$s = 1.5$]{ \label{wce_small_fig} %% label for first subfigure
		\includegraphics[width=2.4in]{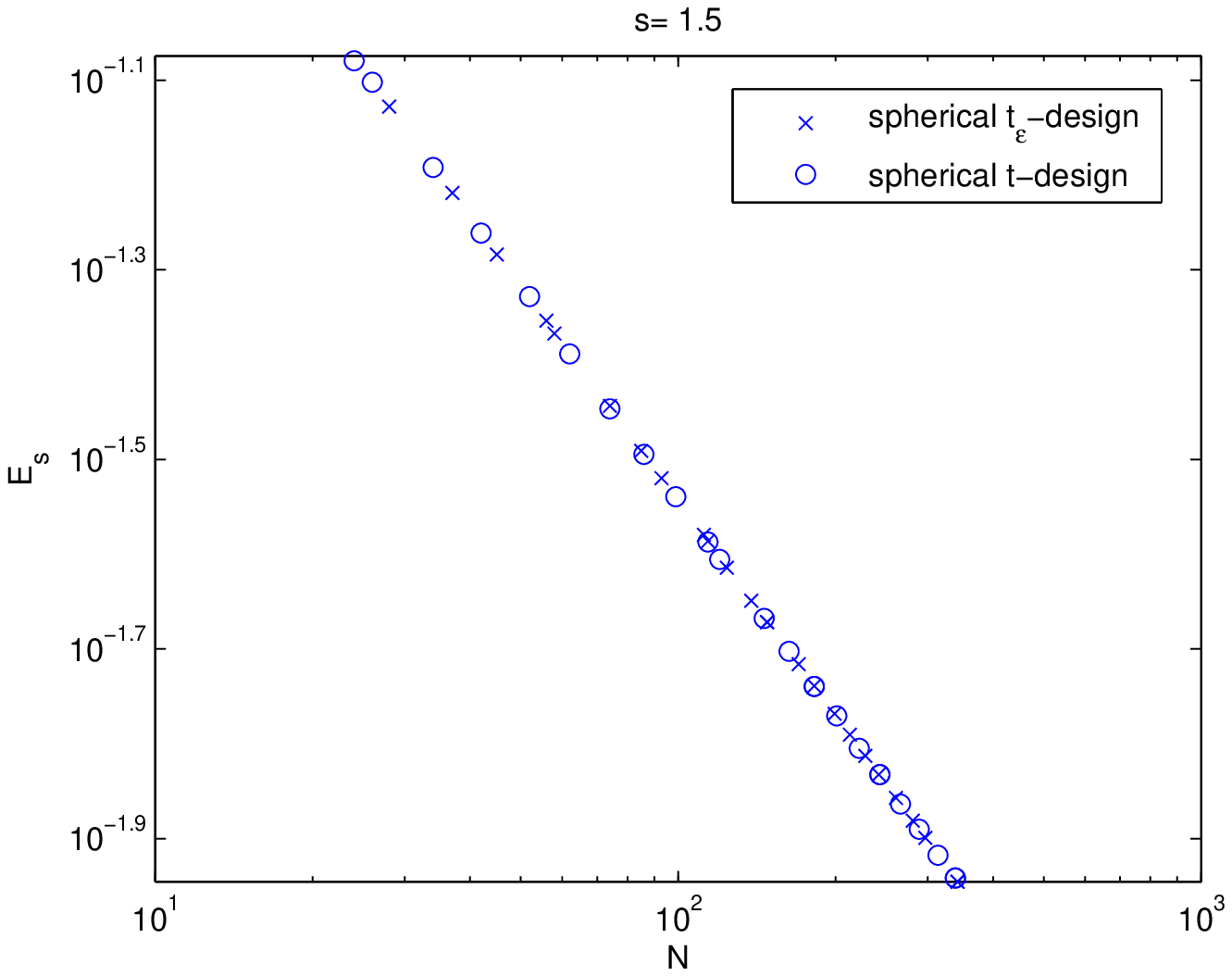}}
	\subfigure[$s = 5.5$]{ \label{wce_large_fig} %% label for second subfigure
		\includegraphics[width=2.4in]{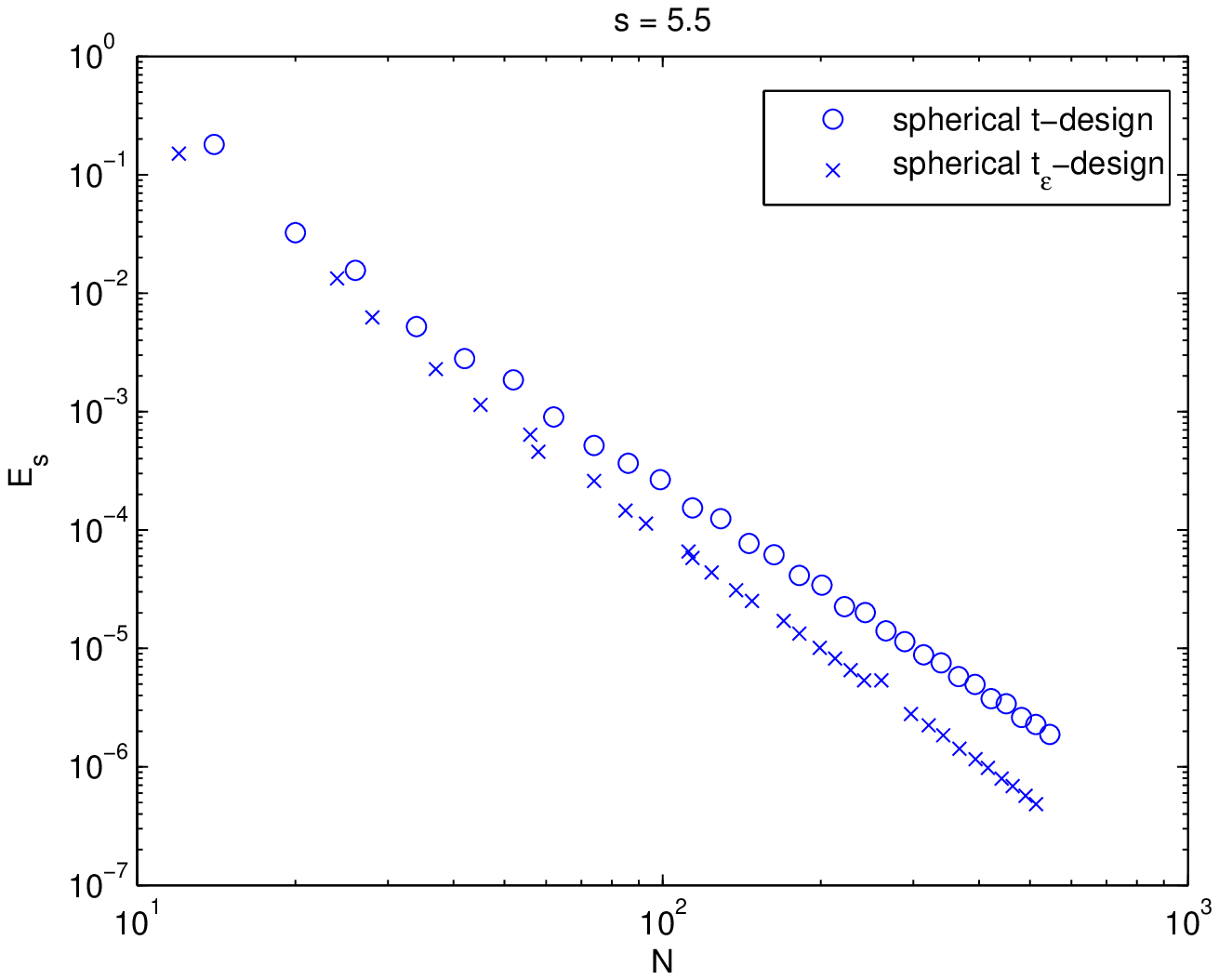}}
	\caption{Worst-case errors for spherical $t_{0.1}$-designs and spherical $t$-designs}
\end{figure}

The worst-case errors of quadrature rules using spherical $t_{0.1}$-designs in $\Hsst$ for $s = 1.5 $  are illustrated in Fig. \ref{wce_small_fig}.
For comparison, the worst-case errors for quadrature rules using approximate spherical $t$-designs computed in \cite{sloan2009variational}  will also be implemented.
For all spherical $t_{0.1}$-designs, the worst-case error with $s = 1.5$ is calculated using (\ref{E_s_small}) and the distance kernel, and for spherical $t$-designs the worst-case errors are calculated by relative results in \cite{brauchart2014qmc}.
From the figure we can see that in this case, the computed worst-case errors of approximate spherical $t$-designs and spherical $t_{0.1}$-designs essentially lie on the same curve, which remains as a conjecture that the worst-case errors of both spherical $t$-designs and spherical $t_\epsilon$-designs decay in the same speed with respect to the number of points in the case $s<2$ on $\St$.
Figure \ref{wce_large_fig} plots the worst-case errors for  both spherical $t$-designs and spherical $t_{0.1}$-designs with $s = 5.5$.
From the figure we can see that the worst-case errors of spherical $t_{0.1}$-designs decay faster than the ones of spherical $t$-design with respect to the number of points.

\section{Polynomial approximation on the sphere using spherical $t_\epsilon$-designs}

\subsection{Regularized weighted least squares approximation using spherical $t_\epsilon$-designs}

In this section we consider the restoration of a continuous function  $f \in C(\St)$ from its noisy values $f^\delta$ given at $N$ points $X_N = \{ \mx_1, \ldots, \mx_N\} \subset \mathbb{S}^2$ by the $l_2-l_1$ regularized weighted discrete least squares form
\begin{equation}\label{weighted}
 \min_{\alpha_{\ell,k} \in \bR} \frac12 \sum_{j = 1}^N \mu_j(\sum_{\ell = 0 }^{L}\sum_{k =1}^{2k+1}\alpha_{\ell,k}Y_{\ell,k}(\mx_j) - f^\delta(\mx_j))^2 + \lambda  \sum_{\ell = 0}^L\sum_{k =1}^{2\ell+1}|\beta_{\ell,k} \alpha_{\ell,k}|
\end{equation}
where $\mu_j > 0$, $j = 1, \ldots, N$ are the weights for each term of the least squares model, $\lambda > 0 $ is the regularization parameter, and $\beta_{\ell,k} \ge 0 $, $\ell = 0,\ldots,L, \  k  = 1,\ldots,2\ell +1 $ are usually chosen with the meaning of certain polynomial operators such as Laplace-Beltrami operator and filtered operator \cite{an2012regularized,sloan2012filtered}. In \cite{pereverzyev2014parameter} both a priori choice based physical reason in satellite gravity gradiometry problem and a posteriori choice based on reproducing kernel theory are considered to choose $\beta_{\ell,k}$.

Note that  $\{ Y_{\ell,k}, \  k = 1,\ldots,2\ell+1, \ \ell =0,\ldots,L\}$ is a basis of $\mathbb{P}_L$. Problem (\ref{weighted}) is to find a good approximation of $f$ in $\mathbb{P}_L$ in the form
$$p_{L,N}(\mx)= \sum_{\ell = 0}^{L}\sum_{k = 1}^{2\ell+1} \alpha_{\ell, k} Y_{\ell,k}(\mx).$$
%which is a good approximation of original data on $\mathbb{P}_L$.
Let the entries of  matrix $\mathbf{Y}_L \in \mathbb{R}^{N \times (L+1)^2 }$ be
$$(\mathbf{Y}_L)_{i,\ell^2+k} = Y_{\ell,k}(\mx_i), \ \  i = 1,\ldots, N, \ \ell= 0,\ldots,L, \  k = 1,\ldots,2\ell+1, $$
and
$\mathbf{f}^\delta = (f^\delta(\mx_1),\ldots,f^\delta(\mx_N))^T$.
Problem (\ref{weighted}) can be reformulated  as
\begin{equation}\label{weighted_2}
 \quad \min_{\alpha \in \mathbb{R}^{(L+1)^2}} \frac12\|\mathbf{\Lambda}^{\frac12}(\mathbf{Y}_L\alpha - \mathbf{f}^\delta)\|^2_2 + \lambda  \| \mathbf{D}\alpha\|_1,
\end{equation}
where
$$
\mathbf{\Lambda} = \left[
                            \begin{array}{ccc}
                              \mu_1 &  &  \\
                               & \ddots &    \\
                               &  &   \mu_N \\
                            \end{array}
                          \right] \in \bR^{N\times N},
$$
and $\mathbf{D}$ is a diagonal matrix satisfying $\mathbf{D}_{\ell^2+k,\ell^2+k} = \beta_{\ell,k}$ with $\beta_{\ell,k} \ge 0$. For polynomial approximation on the sphere, an $l_2$-regularized weighted least squares model has also been considered \cite{an2012regularized,pereverzyev2014parameter}
\begin{equation}\label{weighted_3}
\min_{\alpha \in \mathbb{R}^{(L+1)^2}} \frac12 \|\mathbf{\Lambda}^{\frac12}(\mathbf{Y}_L\alpha - \mathbf{f}^\delta)\|^2_2 + \lambda  \| \mathbf{D}\alpha\|_2^2.
\end{equation}
The regularization of this model is of $l_2$ norm, which can be seen as a measure of energy.
It is known that the $l_1$ regularization has desirable properties in approximation of nonsmooth continuous functions.
An $l_1$ regularization term is preferable to be considered here.
% which is in some sense the one closest to $l_0$ norm among the convex forms, see \cite{bruckstein2009sparse}.
By choosing a suitable penalization term, the $l_2-l_1$ regularized model  is usually supposed to  achieve a more sparse solution than the $l_2-l_2$ regularized one, which means that the target function is approximated by less basis spherical polynomials.
Additionally, for functions which are globally continuous but locally non-differentiable on the sphere, the $l_2-l_1$ regularization is  better than the $l_2-l_2$ regularization.

\begin{theorem}
 Let $X_N$ be a spherical $t_\epsilon$-design and $\mw$ be the vector of weights satisfying  (\ref{a_t_design2}) and  (\ref{a_t_design}) with respect to $X_N$.
Let $L \geq 0$.
 For model (\ref{weighted_2}) set $\mu_j = w_j$ for $j =  1,\ldots,N$. Then
 \begin{equation}\label{H_L}
   \mathbf{H}_L = \mathbf{Y}^T_L \mathbf{\Lambda}\mathbf{Y}_L = \mathbf{I}_{(L+1)^2},
 \end{equation}
and (\ref{weighted_2}) has the unique solution
 \begin{equation}\label{alpha_lk}
   \alpha_{\ell,k} = \max\{0,s_{\ell,k}-\lambda \beta_{\ell,k}\}+\min\{0,s_{\ell,k}+\lambda\beta_{\ell,k}\},
 \end{equation}
 for $\ell = 0, \ldots,L$, $k =1,\ldots,2k+1$, where $s_{\ell,k} =\sum_{i =1}^{N}w_iY_{\ell,k}(\mx_i)f^\delta(\mx_i)$.
\end{theorem}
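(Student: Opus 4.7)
The plan is to split the proof into two parts: first establishing the orthogonality identity $\bHL = \bI_{(L+1)^2}$, and then using it to reduce the convex minimization to independent scalar soft-thresholding problems.

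For the first part, I will compute a general entry of $\bHL = \YL^T \mathbf{\Lambda} \YL$. The $(\ell^2+k,\ell'^2+k')$ entry equals $\sum_{j=1}^N w_j Y_{\ell,k}(\mx_j) Y_{\ell',k'}(\mx_j)$, since $\mu_j$ has been chosen equal to $w_j$. The product $Y_{\ell,k} Y_{\ell',k'}$ is a spherical polynomial of degree at most $\ell + \ell' \le 2L$; assuming the implicit hypothesis $t \ge 2L$, the $t_\epsilon$-design quadrature rule \eqref{a_t_design} integrates it exactly, so the sum equals $\int_{\St} Y_{\ell,k}(\mx) Y_{\ell',k'}(\mx)\,{\rm d}\omega(\mx) = \delta_{\ell,\ell'}\delta_{k,k'}$ by the orthonormality \eqref{Ylkint}. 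Hence $\bHL = \bI_{(L+1)^2}$.

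For the second part, I will expand the squared-norm term using $\bHL = \bI$:
\begin{equation*}
\tfrac{1}{2}\|\mathbf{\Lambda}^{1/2}(\YL\va - \mathbf{f}^\delta)\|_2^2 = \tfrac{1}{2}\va^T\bHL\va - \va^T \YL^T\mathbf{\Lambda}\mathbf{f}^\delta + \tfrac{1}{2}(\mathbf{f}^\delta)^T\mathbf{\Lambda}\mathbf{f}^\delta = \tfrac{1}{2}\|\va\|_2^2 - \va^T s + C,
\end{equation*}
where $C$ is independent of $\va$ and the vector $s$ has components $s_{\ell,k} = \sum_{i=1}^N w_i Y_{\ell,k}(\mx_i) f^\delta(\mx_i)$. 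Since $\mathbf{D}$ is diagonal, the full objective in \eqref{weighted_2} decouples into the separable form $\sum_{\ell,k}\bigl(\tfrac{1}{2}\alpha_{\ell,k}^2 - s_{\ell,k}\alpha_{\ell,k} + \lambda\beta_{\ell,k}|\alpha_{\ell,k}|\bigr)$ plus a constant.

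Each scalar subproblem $\min_{\alpha\in\bR}\,\tfrac{1}{2}\alpha^2 - s_{\ell,k}\alpha + \lambda\beta_{\ell,k}|\alpha|$ is strictly convex and coercive, so it has a unique minimizer. I will find it by the standard subdifferential argument: $0$ lies in the subgradient $\alpha - s_{\ell,k} + \lambda\beta_{\ell,k}\partial|\alpha|$. A three-case analysis on the sign of $s_{\ell,k}$ relative to $\pm\lambda\beta_{\ell,k}$ yields the soft-thresholding formula, which rearranges to the closed form \eqref{alpha_lk}. Strict convexity of the whole objective guarantees uniqueness of the global minimizer.

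I do not expect any step to be a serious obstacle; the only subtlety is verifying that the degree bound is compatible with the $t_\epsilon$-design quadrature accuracy, which is what forces $t \ge 2L$ and makes $\bHL = \bI$ hold exactly. Once this identity is in hand, decoupling and soft-thresholding are routine.
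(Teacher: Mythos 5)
Your proposal is correct and follows essentially the same route as the paper: exactness of the $t_\epsilon$-design quadrature plus orthonormality of the $Y_{\ell,k}$ gives $\mathbf{H}_L=\mathbf{I}_{(L+1)^2}$, and then the diagonal $\mathbf{D}$ makes the problem separable so each component reduces to the scalar soft-thresholding formula obtained from the subdifferential optimality condition. The only (welcome) difference is that you state the degree condition $t\ge 2L$ explicitly, which the theorem statement leaves implicit but which the paper itself relies on (and acknowledges later when it restricts to $t\ge 2L$ in the experiments).
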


\begin{proof} Note that when $X_N$ is a spherical $t_\epsilon$-design,
 \begin{eqnarray}\label{HL_identity}
                          % \nonumber to remove numbering (before each equation)
                    \nonumber        \left(\mathbf{H}_L\right)_{\ell^2+ k , (\ell^\prime)^2+ k^\prime} &=& \sum_{i = 1}^{N}w_i Y_{\ell,k}(\mx_i) Y_{\ell^\prime,k^\prime}(\mx_i) \\
                             &=& \int_{\mathbb{S}^2} Y_{\ell,k}(\mx) Y_{\ell^\prime,k^\prime}(\mx) {\rm d} \omega(\mx)
=\delta_{\ell \ell^\prime} \delta_{k k^\prime},
                          \end{eqnarray}
where the third equality is established by the orthonormality of spherical harmonics.
Problem (\ref{weighted_2}) is strictly convex by the fact that $\mathbf{H}_L$ is nonsingular and so it has a unique optimal solution.
%Formula (\ref{alpha_lk}) and (\ref{alpha_lk2}) is equivalent to each other under the condition $\beta_{\ell,k} \ge 0$ hence we only need to prove (\ref{alpha_lk}).
Since $A(\alpha) = \frac12 \|\mathbf{\Lambda}^{\frac12}(\mathbf{Y}_L\alpha - \mathbf{f}^\delta)\|^2_2$ is strictly differentiable,
by deriving the first optimality condition of (\ref{weighted_2}) and Corollary 1 in \cite[Section 2.3]{clarke1990optimization}, we obtain that its unique optimal solution satisfies
 \begin{equation}\label{1st_optimality}
   0 \in \mathbf{H}_L\alpha - \mY^T_L\mathbf{W}\mathbf{f}^\delta + \lambda \partial(\|\mathbf{D}\alpha\|_1),
 \end{equation}
where $\partial(\cdot)$ denotes the subdifferential. By (\ref{HL_identity}) which implies $\mathbf{H}_L = \mathbf{I}_{(L+1)^2}$ and the fact that $\mathbf{D}$ is diagonal, problem (\ref{1st_optimality})  is separable and thus $\alpha$ is a solution of (\ref{1st_optimality}) if and only if it is a solution of
\begin{equation}\label{seperate_problem}
0 \in \alpha_{\ell,k} - s_{\ell,k} + \lambda  \beta_{\ell,k} \partial | \alpha_{\ell,k}|, \quad \ell = 0,\ldots,L, k = 1,\ldots,2\ell+1.
\end{equation}
Denote $\tau_{\ell,k} = \partial|\alpha_{\ell,k}|$ and hence $ - 1 \le \tau_{\ell,k} \le 1$.
Let $\alpha_{\ell,k}^\ast$ be the optimal solution of (\ref{seperate_problem}) with corresponding $\ell$ and $k$ and hence
\begin{equation}\label{alpha_lk_ast}
\alpha_{\ell,k}^\ast  = s_{\ell,k} - \lambda \beta_{\ell,k} \tau_{\ell,k} \quad {\rm with}  \ \tau_{\ell,k} \in [-1,1].
\end{equation}
When $s_{\ell,k} > \lambda\beta_{\ell,k}$ we can set $\tau_{\ell,k} = 1$ and obtain
$$ \alpha^\ast_{\ell,k} = s_{\ell,k} - \lambda \beta_{\ell,k} > 0,$$
which together with $\beta_{\ell,k} \ge 0$ satisfies  (\ref{alpha_lk}) and (\ref{alpha_lk_ast}). When $s_{\ell,k} < -\lambda\beta_{\ell,k}$ similarly we set $\tau_{\ell,k} = -1$ and get
$$ \alpha^\ast_{\ell,k} = s_{\ell,k} + \lambda \beta_{\ell,k} < 0,$$
which also satisfies  (\ref{alpha_lk}) and (\ref{alpha_lk_ast}).
Then when $s_{\ell,k} \in [- \lambda\beta_{\ell,k},\lambda\beta_{\ell,k}]$ we set $\tau_{\ell,k} = \frac{s_{\ell,k}}{\lambda \beta_{\ell,k}} \in [-1,1]$ and get that
$$ \alpha^\ast_{\ell,k} = 0,$$
which also satisfies  (\ref{alpha_lk}) and (\ref{alpha_lk_ast}).
Hence the theorem is proved.
\end{proof}

Denote the approximation residual as  $A(\alpha) = \sum_{j = 1}^{N} (p_{L,N}(\mx_j) - f^\delta(\mx_j))^2$. Let $\alpha^\ast(\lambda)$ be the optimal solution of (\ref{weighted_2}) with different regularized parameters $\lambda$. The following proposition indicates that $A(\alpha^\ast(\lambda))$ is monotonically  increasing with respect to $\lambda$.

\begin{proposition}
Let $X_N^\epsilon$ be a spherical $t_\epsilon$-design with $t \ge 2 L$ and $\mu_j = w_j$ for $j =  1,\ldots,N$.
Then $A(\alpha^\ast(\lambda))$ is  increasing in $\lambda$.
\end{proposition}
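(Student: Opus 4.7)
The plan is to exploit the closed-form solution (\ref{alpha_lk}) from Theorem 5.1 together with the orthogonality identity (\ref{HL_identity}) to reduce the problem to a sum of independent scalar functions of $\lambda$, each of which is visibly non-decreasing. With $\mu_j=w_j$ and $t\ge 2L$ (the latter is needed so that products $Y_{\ell,k}Y_{\ell',k'}$ of degree $\le 2L$ are integrated exactly by the $t_\epsilon$-design), the identity $\mathbf{H}_L=\mathbf{I}_{(L+1)^2}$ lets me expand the weighted residual as
\[
\sum_{j=1}^N w_j\bigl(p_{L,N}(\mathbf{x}_j)-f^\delta(\mathbf{x}_j)\bigr)^2 \;=\; \|\alpha\|_2^2 - 2\alpha^T s + \|\mathbf{\Lambda}^{1/2}\mathbf{f}^\delta\|_2^2,
\]
with $s=(s_{\ell,k})$, so the residual decouples completely across the indices $(\ell,k)$.

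The first route is direct computation. Substituting the soft-thresholded minimiser $\alpha_{\ell,k}^*(\lambda)=\mathrm{sign}(s_{\ell,k})\max(0,|s_{\ell,k}|-\lambda\beta_{\ell,k})$ into each scalar term $\alpha_{\ell,k}^2 - 2\alpha_{\ell,k}s_{\ell,k}$, a short case analysis shows that this term equals $\lambda^2\beta_{\ell,k}^2-s_{\ell,k}^2$ on the active region $\{|s_{\ell,k}|>\lambda\beta_{\ell,k}\}$ and vanishes elsewhere, with continuous agreement at the knot $\lambda=|s_{\ell,k}|/\beta_{\ell,k}$. Each term is therefore non-decreasing in $\lambda$ (strictly so whenever $\beta_{\ell,k}>0$ and $\lambda$ is below the knot), and summing over $(\ell,k)$ delivers the monotonicity.

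As a cleaner alternative that sidesteps the case split, I would present the standard variational argument. For any $\lambda_1<\lambda_2$, optimality of $\alpha^*(\lambda_i)$ at parameter $\lambda_i$ yields
\begin{align*}
\tfrac12 A(\alpha^*(\lambda_1)) + \lambda_1 \|\mathbf{D}\alpha^*(\lambda_1)\|_1 &\le \tfrac12 A(\alpha^*(\lambda_2)) + \lambda_1 \|\mathbf{D}\alpha^*(\lambda_2)\|_1,\\
\tfrac12 A(\alpha^*(\lambda_2)) + \lambda_2 \|\mathbf{D}\alpha^*(\lambda_2)\|_1 &\le \tfrac12 A(\alpha^*(\lambda_1)) + \lambda_2 \|\mathbf{D}\alpha^*(\lambda_1)\|_1;
\end{align*}
adding them first gives $(\lambda_2-\lambda_1)(\|\mathbf{D}\alpha^*(\lambda_2)\|_1-\|\mathbf{D}\alpha^*(\lambda_1)\|_1)\le 0$, so $\|\mathbf{D}\alpha^*(\lambda)\|_1$ is non-increasing in $\lambda$, and plugging this back into the first inequality forces $A(\alpha^*(\lambda_1))\le A(\alpha^*(\lambda_2))$.

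The main subtlety is interpretational rather than technical: the statement writes $A$ without weights, whereas the functional minimised in (\ref{weighted_2}) is the weighted squared residual and both the closed form (\ref{alpha_lk}) and the identity $\mathbf{H}_L=\mathbf{I}$ are tied to that weighted structure. I would therefore read $A(\alpha)$ as $\sum_{j=1}^N w_j(p_{L,N}(\mathbf{x}_j)-f^\delta(\mathbf{x}_j))^2$, which is the natural object attached to the choice $\mu_j=w_j$; both arguments above then yield the conclusion verbatim, with strict monotonicity inherited from any index $(\ell,k)$ whose threshold $|s_{\ell,k}|/\beta_{\ell,k}$ lies above the range of $\lambda$ under consideration.
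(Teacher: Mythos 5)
Your proposal is correct, and it actually contains two valid arguments, one of which differs from the paper's. The paper's proof is a hybrid: it invokes the minimization property of (\ref{weighted_2}) once (yielding $A(\alpha^\ast)-A(\tilde\alpha^\ast)\le E(\lambda,\tilde\alpha^\ast)-E(\lambda,\alpha^\ast)$) and then uses the explicit soft-thresholding formula (\ref{alpha_lk}) to get $|\tilde\alpha^\ast_{\ell,k}|=\max(0,|s_{\ell,k}|-\tilde\lambda\beta_{\ell,k})\le|\alpha^\ast_{\ell,k}|$, hence $E(\lambda,\tilde\alpha^\ast)\le E(\lambda,\alpha^\ast)$; so the hypotheses $t\ge 2L$, $\mu_j=w_j$ enter through $\mathbf{H}_L=\mathbf{I}$ and the closed form. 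Your second route replaces the closed-form step by the second optimality inequality, making the argument purely variational; it is shorter and does not need the orthonormality at all (that assumption then only guarantees uniqueness of the minimizer), which is a genuine simplification and a mild generalization. Your first route --- substituting the soft-thresholded solution and checking that each decoupled scalar term equals $\lambda^2\beta_{\ell,k}^2-s_{\ell,k}^2$ on the active set and $0$ otherwise --- is a different, fully explicit computation not in the paper, and it has the bonus of identifying exactly when the increase is strict. Your interpretational remark is also well taken: the paper defines $A$ just before the proposition as the unweighted residual but then applies the minimization property of (\ref{weighted_2}), whose data term is the weighted residual (with a factor $\tfrac12$); reading $A(\alpha)=\sum_j w_j\bigl(p_{L,N}(\mathbf{x}_j)-f^\delta(\mathbf{x}_j)\bigr)^2$, as you do, is the reading under which both your arguments and the paper's own proof are airtight.
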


\begin{proof}
Let $\lambda$, $\tilde{\lambda}$ be given with $0 < \lambda \le \tilde{\lambda}$ and denote the optimal solution of problem (\ref{weighted_2}) with $\lambda$, $\tilde{\lambda}$ as $\alpha^\ast$, $\tilde{\alpha}^\ast$ respectively.
Denote $E(\lambda,\alpha) = \lambda\|\mathbf{D}\alpha\|_1$ and the minimization property of (\ref{weighted_2}) for $\lambda$ gives
\begin{equation}\label{proposition4.1.1}
  A(\alpha^\ast) + E(\lambda,\alpha^\ast) \le A(\tilde\alpha^\ast) + E(\lambda,\tilde\alpha^\ast),
\end{equation}
which implies that
\begin{equation}\label{proposition4.1.2}
  A(\alpha^\ast) - A(\tilde\alpha^\ast) \le   E(\lambda,\tilde\alpha^\ast) -  E(\lambda,\alpha^\ast).
\end{equation}
From (\ref{alpha_lk}) we have
\begin{equation}\label{alpha_lk2}
  \alpha^\ast_{\ell,k} =
  \left\{
                      \begin{array}{cl}
                        s_{\ell,k} - \lambda\beta_{\ell,k}, & \quad\quad s_{\ell,k} > \lambda\beta_{\ell,k}\\
                        s_{\ell,k} + \lambda\beta_{\ell,k}, & \quad\quad s_{\ell,k} < -\lambda\beta_{\ell,k}\\
                        0. &  \quad\quad- \lambda\beta_{\ell,k} \le s_{\ell,k} \le \lambda\beta_{\ell,k} \\
                      \end{array}
                    \right.
\end{equation}
Since $\lambda\beta_{\ell,k} \ge 0$, we have $|\alpha^\ast_{\ell,k}| = \max(0,|s_{\ell,k}| - \lambda \beta_{\ell,k})$.
Together by the fact that
$$
E(\lambda,\alpha^\ast) =  \lambda \sum_{\ell = 0} ^{L} \sum_{k =1}^{2\ell +1}  \beta_{\ell,k} |\alpha^\ast_{\ell,k}|,
$$
we have
$$
|\tilde\alpha^\ast_{\ell,k}| = \max(0,|s_{\ell,k}|- \tilde\lambda \beta_{\ell,k}) \le \max(0,|s_{\ell,k}|- \lambda \beta_{\ell,k}) = |\alpha^\ast_{\ell,k}|.
$$
Hence it is obtained that $E(\lambda,\tilde\alpha^\ast) \le E(\lambda,\alpha^\ast)$. Together with (\ref{proposition4.1.1}) we complete the proof.
\end{proof}

\subsection{Numerical experiments}

In this subsection we report the numerical results to test the efficiency of the $l_2-l_1$ regularized model (\ref{weighted_2}) using spherical $t_\epsilon$-designs.

\textbf{Example 5.1.} In the first numerical test, the target function is selected as spherical polynomials with degree no higher than $L$. Obviously using  both models (\ref{weighted_2}) and (\ref{weighted_3})  the target function can be exactly restored when $\lambda = 0 $ and the data $\mathbf{f}^\delta$ is noise free and  the optimal values of the two models equal to  0 in such case. However, due to the noise in the data vector $\mathbf{f}^\delta$, it is necessary to use the  regularization models.

In this experiment we will use the spherical $t_{0.1}$-designs which is calculated by solving a system of nonlinear equation (\ref{Yw}) and the approximate spherical $t$-designs proposed in \cite{sloan2009variational} as the point set for polynomial approximation. Both the uniform errors and $\mathbb{L}_2$ errors are recorded to measure the approximation quality.
%Note that it is a complicated and time-consuming process to calculate the precise value of the uniform norm and $\mathbb{L}_2$ norm of a function on the sphere.
We choose a large-scaled and well distributed point set $X_t \subset \St$ to be the test set and use it to estimate the errors. Then the uniform error and $\mathbb{L}_2$ error of the approximation  are estimated by
\begin{equation}\label{uniform_error}
  \|f - p_{L,N}\|_{C(\St)} \approx \max_{\mx_i \in X_t}|f(\mx_i)-p_{L,N}(\mx_i)|,
\end{equation}
and
\begin{equation}\label{L2_error}
  \|f - p_{L,N}\|_{\mathbb{L}_2} \approx \left(\dfrac{4\pi}{N_t} \sum_{i =1}^{N_t}(f(\mx_i)-p_{L,N}(\mx_i))^2\right)^{\frac12},
\end{equation}
where $N_t$ denotes the number of points $\mx_i$ in $X_t$. In this experiment, we choose $X_t$ to be an equal area partitioning point set \cite{saff} with $10^5$ points.
The  matrix $\mathbf{D}$ in the experiment is always selected as $\beta_{\ell,k} = \ell(\ell+1)$ for $\ell = 0,\ldots,L, \ k = 2\ell+1$, inspired  by the Laplace-Beltrami operator, see \cite{an2012regularized}.

\begin{figure}[!h]\label{2models_polynomial}
 \centering
\subfigure[Uniform Errors with different $\lambda$ for $\delta = 0.1$]{ \label{rand1_2models_polynomial_l2} %% label for first subfigure
\includegraphics[width=2.4in]{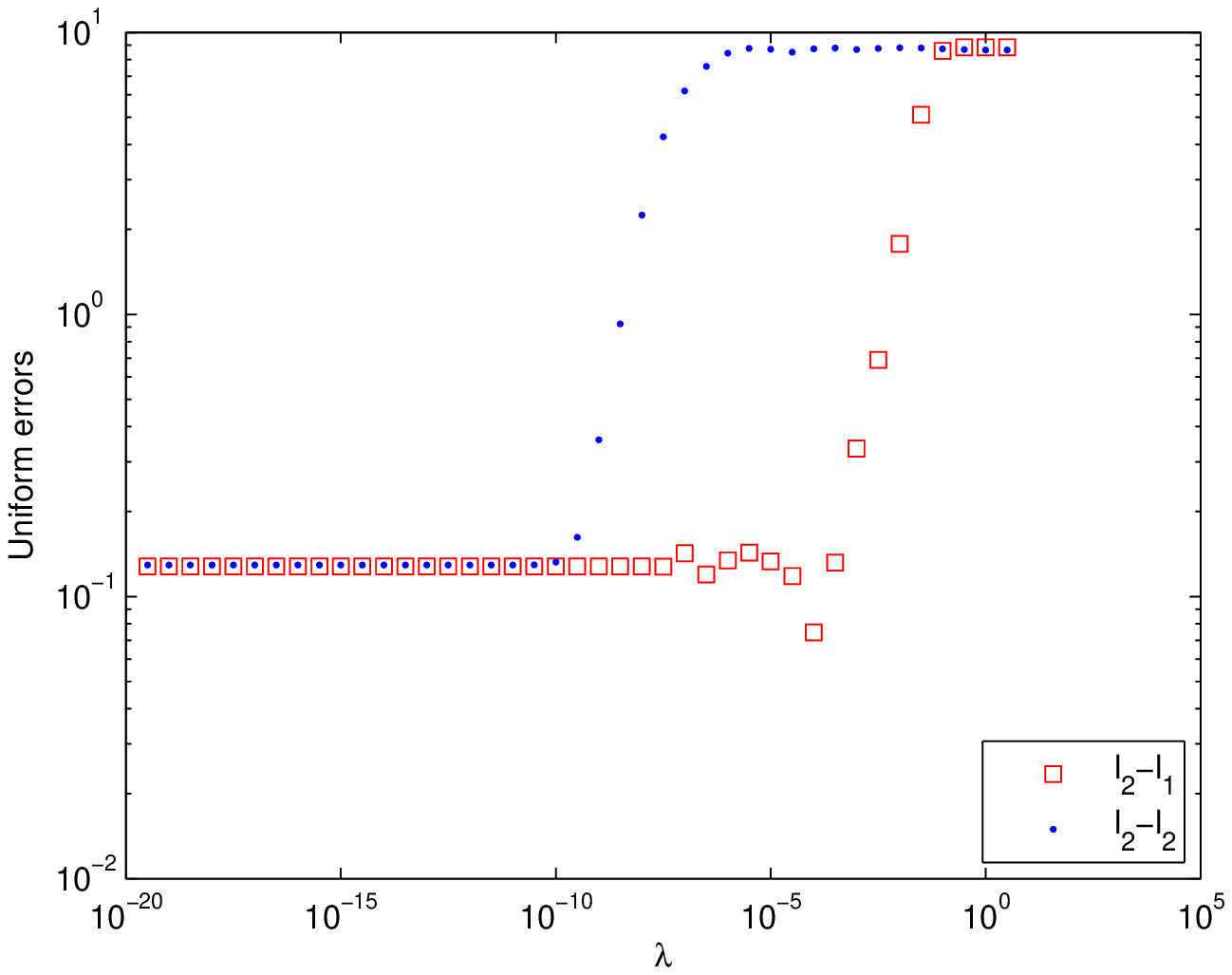}}
\subfigure[$\mathbb L_2$ errors with different $\lambda$ for $\delta = 0.1$]{ \label{rand1_2models_polynomial_uniform} %% label for first subfigure
\includegraphics[width=2.4in]{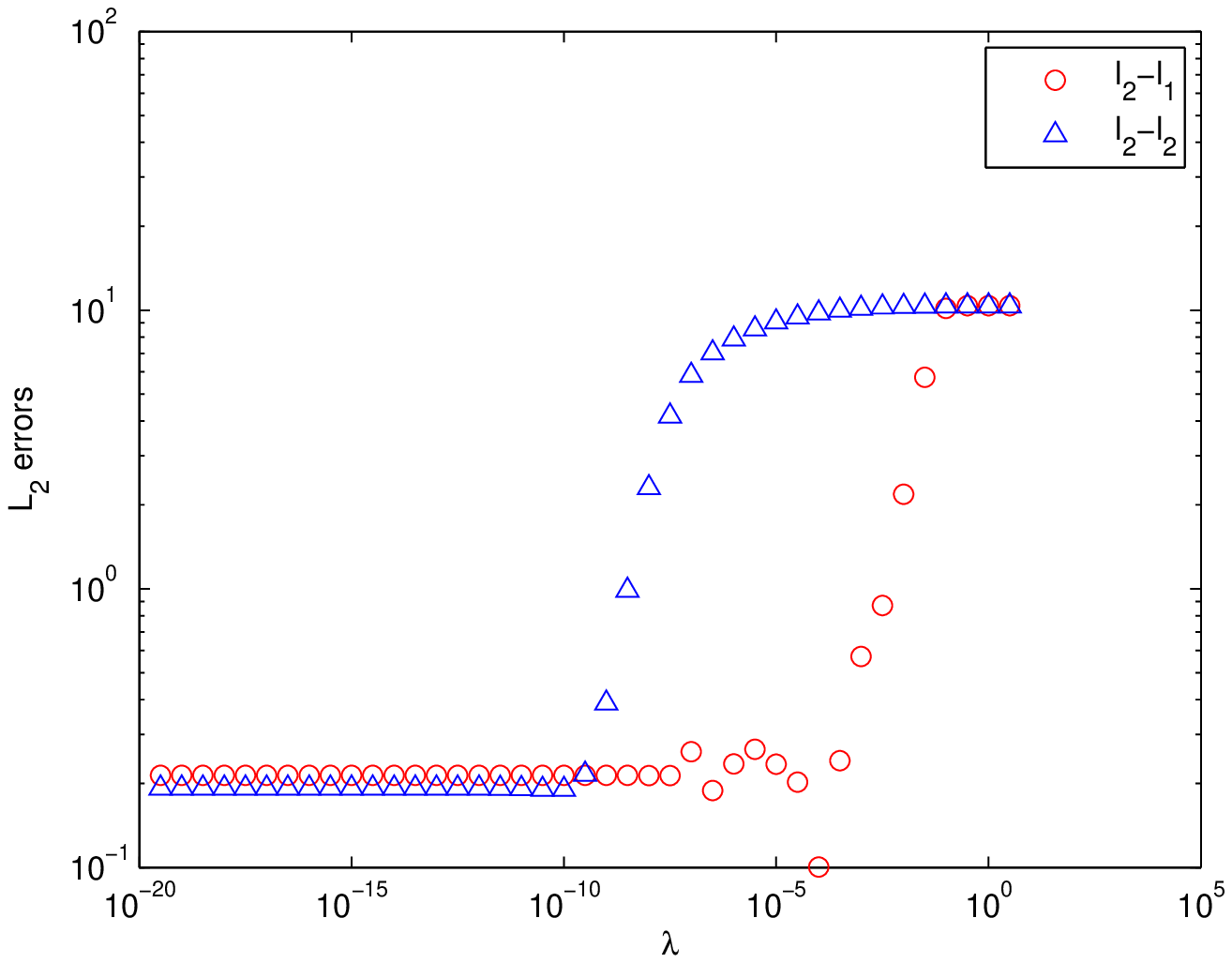}}
\subfigure[Minimal uniform errors with different noise scales $\delta$]{ \label{2models_polynomial_multirand_uniform} %% label for second subfigure
\includegraphics[width=2.4in]{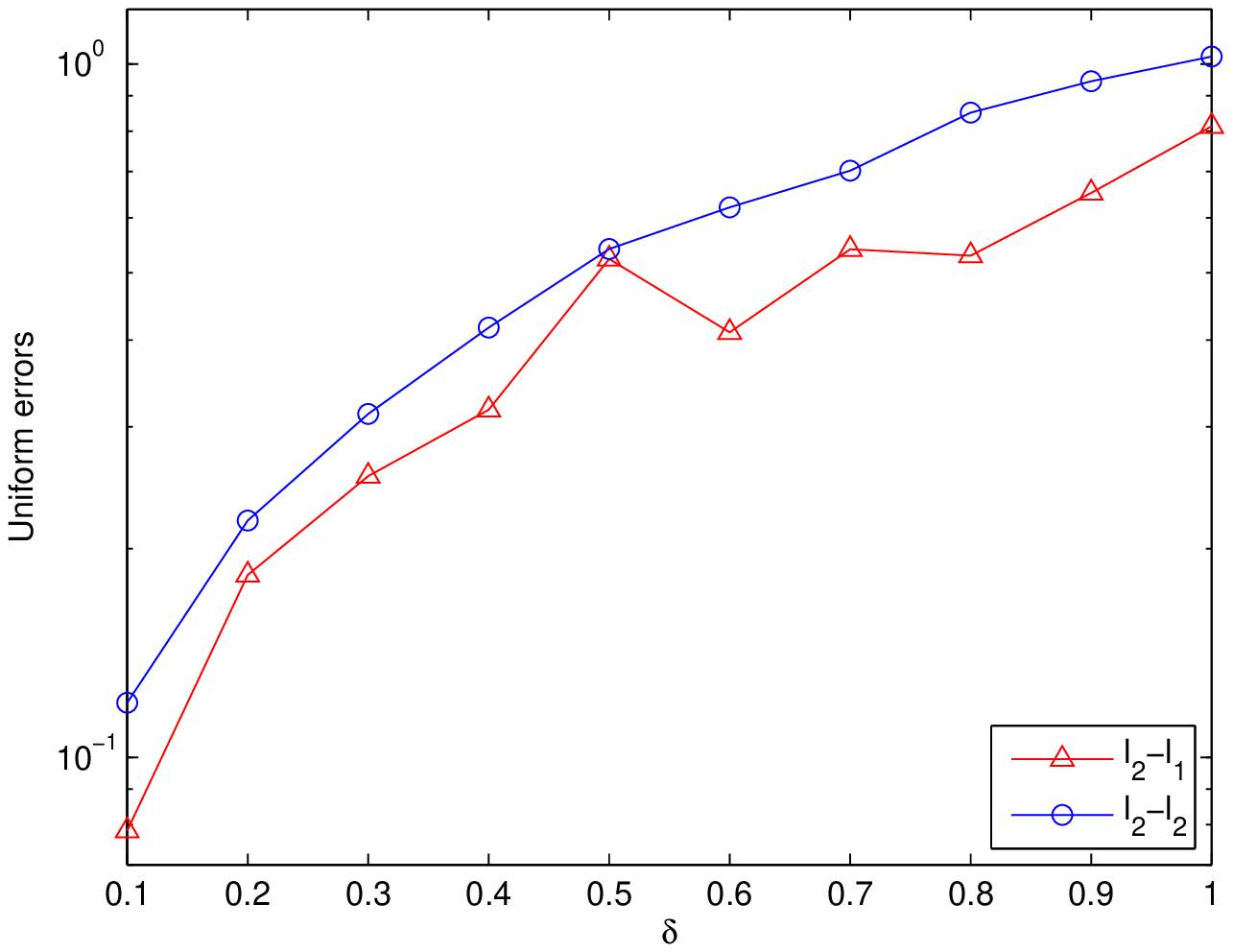}}
\subfigure[Minimal $\mathbb L_2$ errors with different noise scales $\delta$]{ \label{2models_polynomial_multirand_l2} %% label for second subfigure
\includegraphics[width=2.4in]{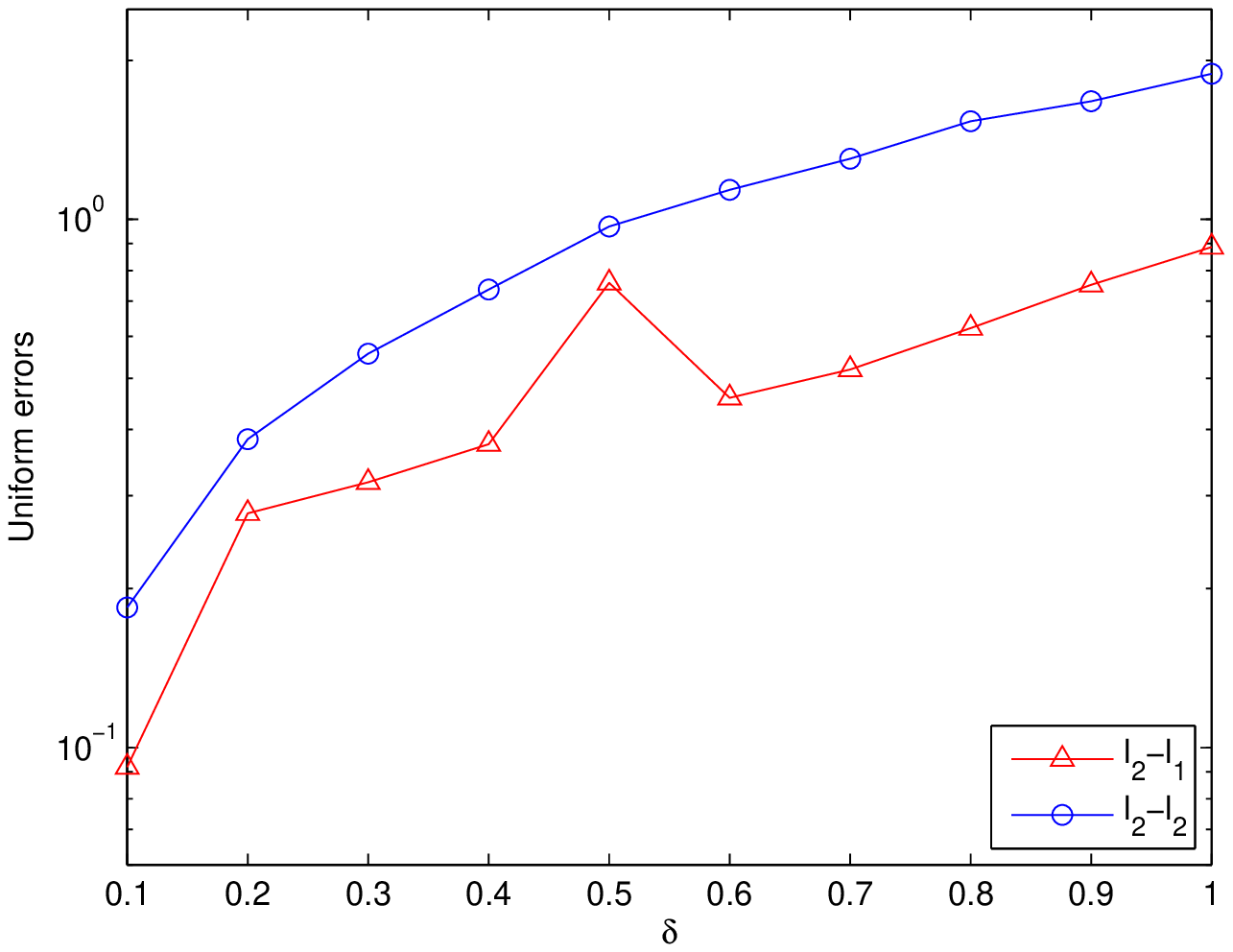}}
\caption{Errors for restoring $18$-degree polynomial}
\end{figure}

Fig. 5.1 shows the approximation errors using both $l_2-l_1$ model (\ref{weighted_2}) and $l_2-l_2$ model (\ref{weighted_3}) with different $\lambda$ and different noise scales $\delta$.
The noise of the data $\mathbf{f}^\delta$ obeys a uniform distribution in $[-\delta,\delta]$.
In this numerical experiment a spherical $37_{0.1}$-design with only 514 points, which is much less than $\lceil \frac {(t+1)^2}2\rceil$, is applied to approximate a randomly generated spherical polynomial with degree $ \lfloor\frac{37}2\rfloor = 18$ (The polynomial is generated with all its Fourier coefficients obeying the standard normal distribution).
The regularization parameter $\lambda$ is chosen from $10^{-20}$ to $10^{0.5}$. Fig. 5.1 (a)(b) give the errors of the approximation with different $\lambda$ for $\delta = 0.1$ using the two models.
From the two sub-figures it can be seen that model (\ref{weighted_2}) can restore the $18$-degree polynomial more accurately than model (\ref{weighted_3}).  The  minimal error with respect to different $\lambda$ can be achieved  at about $\lambda =  10^{-6}$.
Fig. 5.1 (c)(d) show the errors of the restoration results with different noise scales. It can be seen that the model (\ref{weighted_2}) performs better in each noise scale than (\ref{weighted_3}).\\

\textbf{Example 5.2.} In the second numerical experiment we test the numerical performance of model (\ref{weighted_2}) using spherical $t_\epsilon$-designs and spherical $t$-designs. We select the Franke function \cite{renka1988multivariate}
\begin{equation}\label{frankie_function}
\begin{split}
f_1(\mathbf{x})= & f(x,y,z) = 0.75 \exp(-(9x-2)^{2}/4-(9y-2)^{2}/4-(9z-2)^{2}/4) \\
& +0.75 \exp(-(9x+1)^{2}/49-(9y+1)/10-(9z+1)/10) \\
& +0.5\exp(-(9x-7)^{2}/4-(9y-3)^{2}/4-(9z-5)^{2}/4) \\
& -0.2\exp(-(9x-4)^{2}-(9y-7)^{2}-(9z-5)^{2}), \  (x,y,z)\in \mathbb{S}^{2}
\end{split}
\end{equation}
to be the target function which is not a spherical polynomial but continuously differentiable on the whole sphere.
We set $\epsilon = 0.1$ and also $\delta = 0.1$ in this experiment and the scheme of choosing $\lambda$ is the same as in Example 5.1.
For spherical $t_{0.1}$-designs we select those point sets constructed with possible least  points.
As is mentioned above, a spherical $t_{0.1}$-design may be constructed using less than $\lceil \frac {(t+1)^2}2\rceil$ points.
Approximate spherical $t$-designs proposed in \cite{sloan2009variational} are also applied  for comparison.
Note that the minimizer of model (\ref{weighted_2}) has an explicit form (\ref{alpha_lk}) only when $t \geq 2L$, so for different $t$ we choose $L = \lfloor \frac t2\rfloor$.

\begin{figure}[!h]\label{franke_2models}
\centering
\subfigure[Uniform Errors]{ \label{franke_2models_uniform} %% label for first subfigure
\includegraphics[width=2.4in]{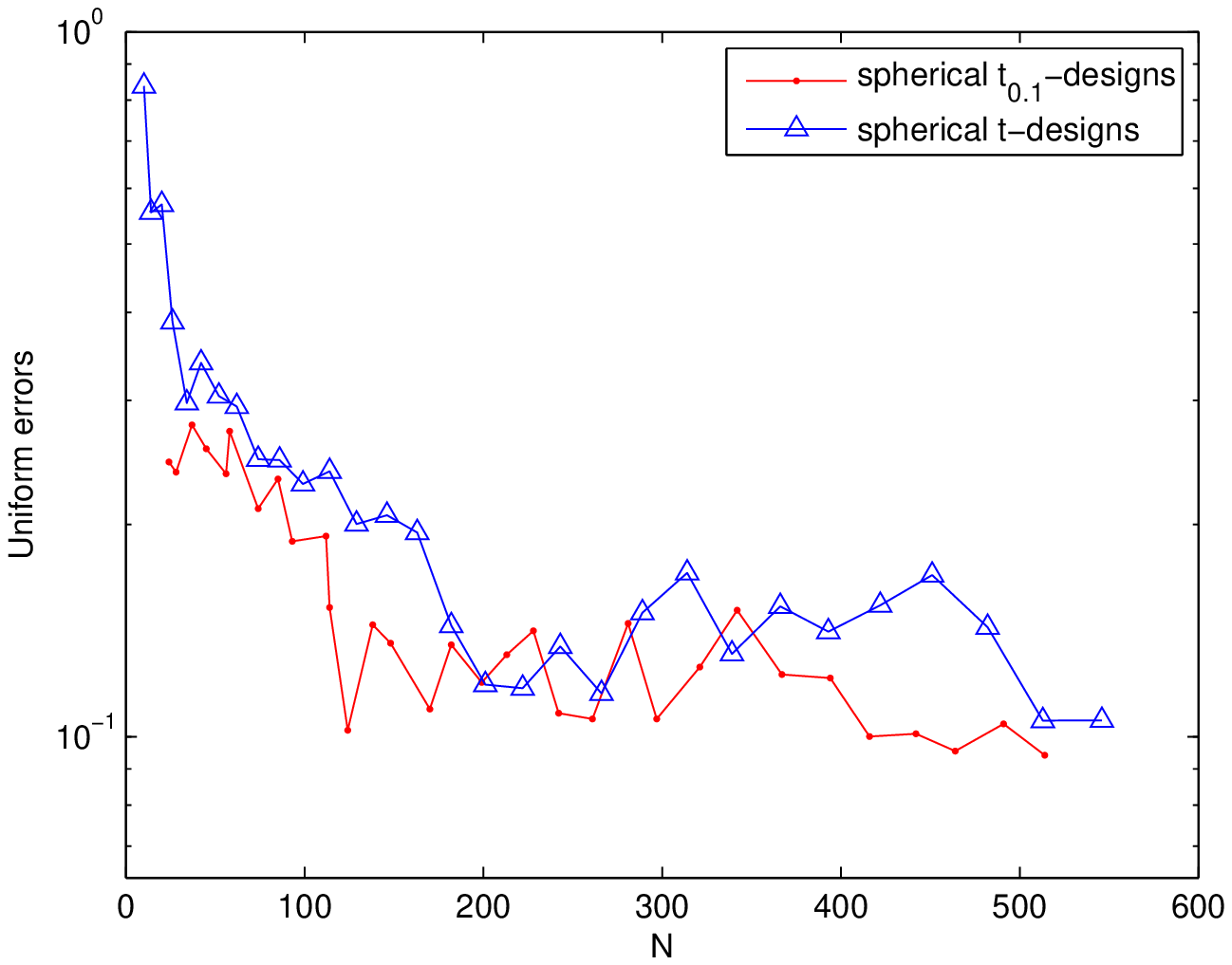}}
\subfigure[$\mathbb L_2$ errors]{ \label{franke_2models_l2} %% label for first subfigure
\includegraphics[width=2.4in]{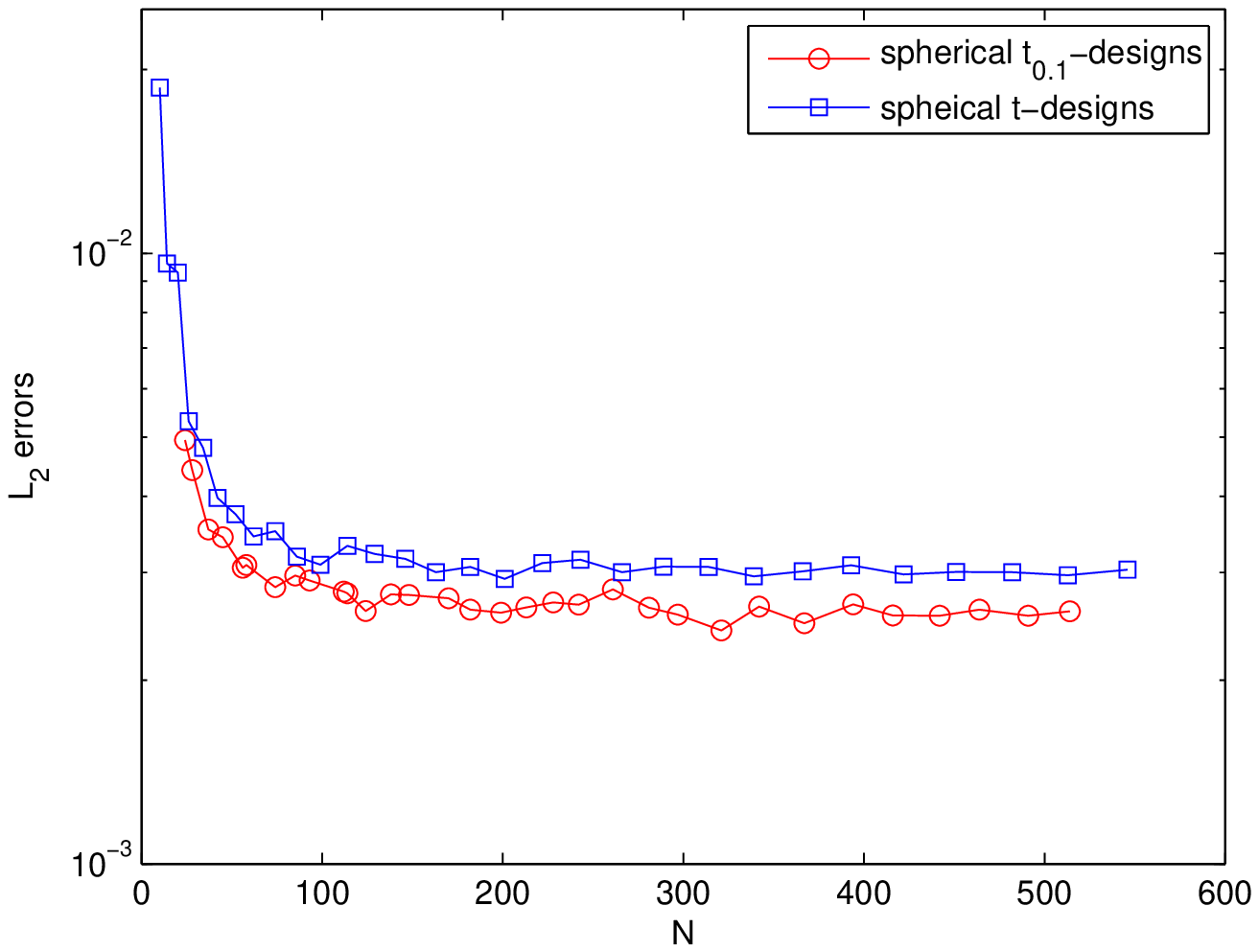}}
\caption{Errors for approximating Franke function with different scales of point sets}
\end{figure}

For Example 5.2, the approximation errors using both spherical $t_{0.1}$-designs and approximate spherical $t$-designs  are shown in Fig. 5.2.
The X-axis represents the number of points in the data sets and the Y-axis represents the minimal uniform errors.
From the figure we can see that approximation using spherical $t_{0.1}$-designs achieves smaller errors  than using approximate spherical $t$-designs in most cases.
Based on the numerical results in Fig. 5.2, the approximation quality can be improved with the relaxation of weights using model (\ref{weighted_2}).\\

\textbf{Example 5.3.} In the third experiment, a continuous but non-differentiable function
\begin{equation}\label{franke_cap}
  f_{2} = f_1(\mx) + f_{cap}(\mx),
\end{equation}
is selected as the target function to approximate, with
\begin{equation}\label{f_cap}
  f_{cap}(\mx) = \left\{
                           \begin{array}{cr}
                             \rho\cos\left(\dfrac{\pi \cos^{-1}(\mathbf{x}_c \cdot \mathbf{x})}{2r}\right), & \ \mathbf{x}\in \mathcal{C}(\mathbf{x}_c,r), \\
                             0, & \mathrm{otherwise}, \\
                           \end{array}
                         \right.
\end{equation}
where $\rho > 0$, $ 0 < r < \pi$.
The function is non-differentiable at the edge of  the  spherical cap $\mathcal{C}(\mx_c,r)$. Since the basis functions applied for approximation are spherical harmonic polynomials which is globally differentiable on $\St$, restoration of the edge of $\mathcal{C}(\mx_c,r)$ turns to be  a challenging problem when the data has noise.

\begin{figure}[!h]
%\label{2models_frankecap}
 \centering
\subfigure
[ $f_2$ with $\|f_2\|_{C(\St)} \approx 3.41$]
{ \label{frankecap} %% label for first subfigure
\includegraphics[width=2.3in]{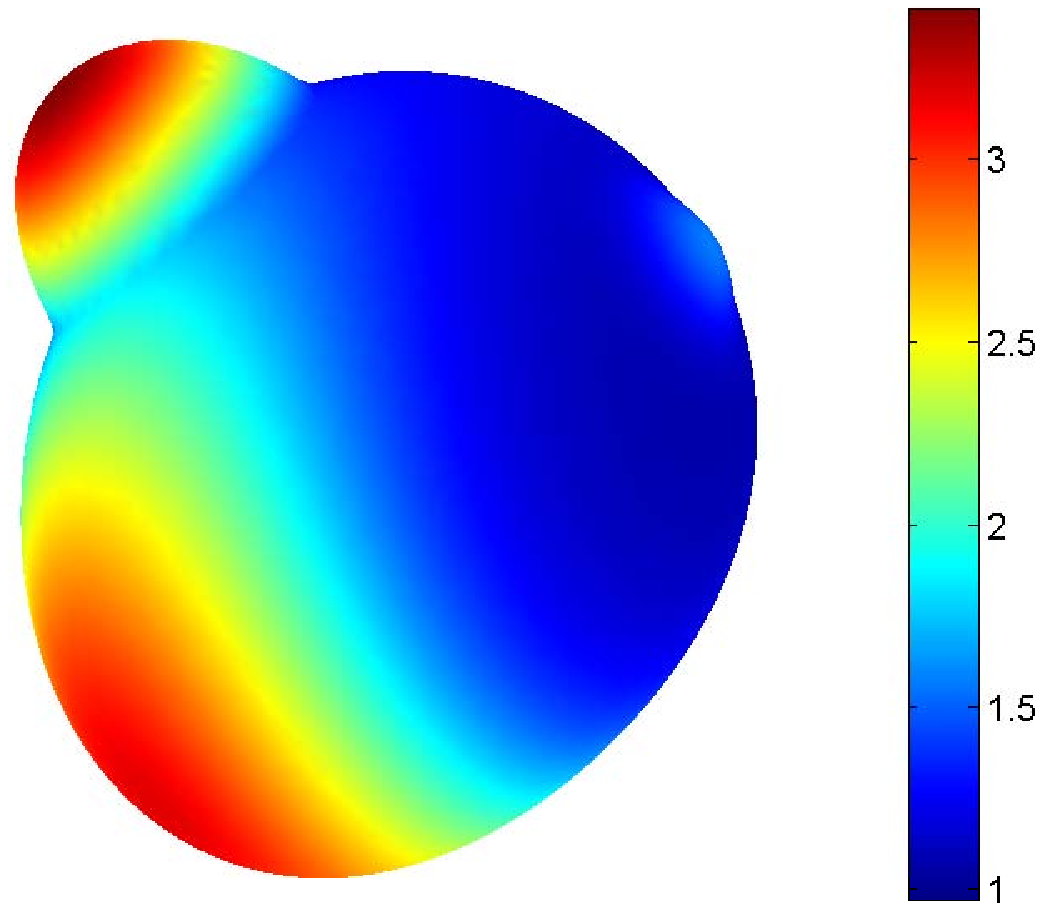}}
\subfigure[ \tiny {$f_2^\delta$ with $\delta = 0.5$}]{ \label{franke_cap_noise_5} %% label for second subfigure
\includegraphics[width=2.3in]{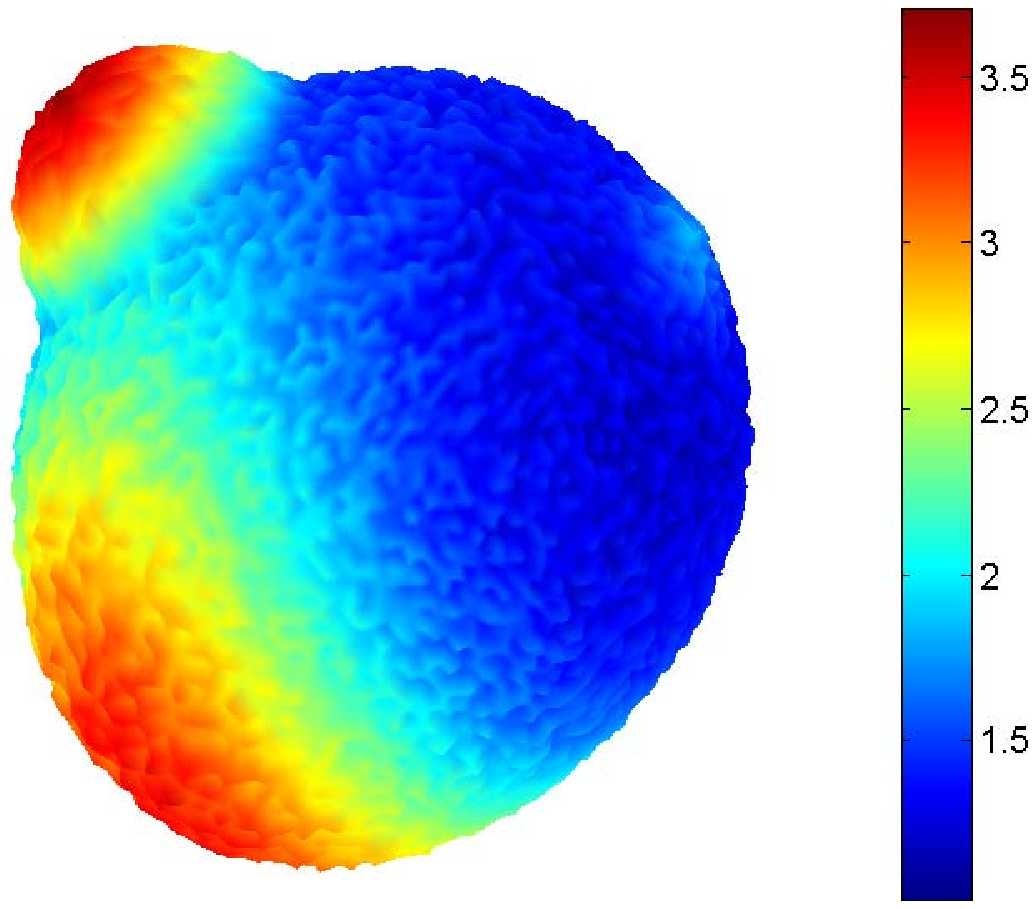}}
\subfigure[ \tiny {$l_2-l_1$ restoration with $\|p_{L,N}\|_{C(\St)} \approx 3.30$ }]{ \label{P1_frankecapnoise} %% label for first subfigure
\includegraphics[width=2.3in]{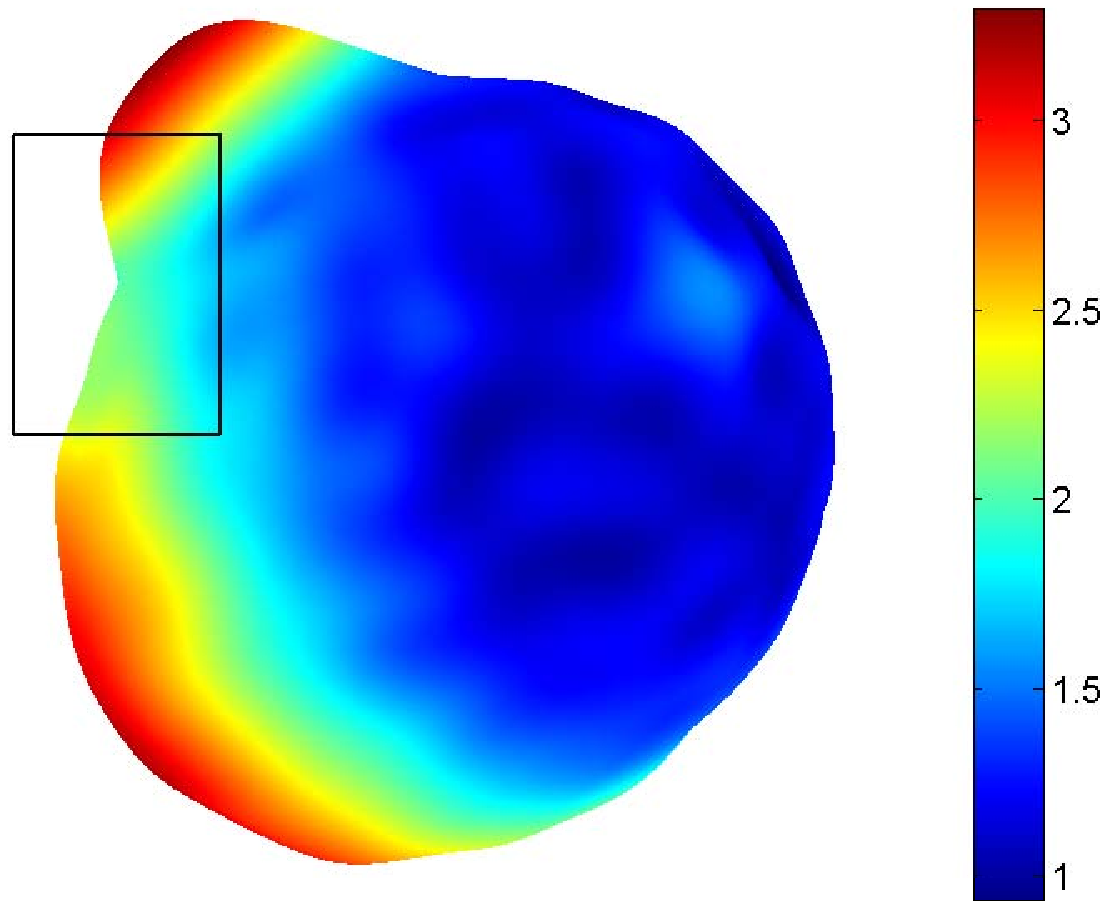}}
\subfigure[ \tiny{$l_2-l_2$ restoration with $\|p_{L,N}\|_{C(\St)} \approx 3.62$}]{ \label{P2_frankecapnoise} %% label for second subfigure
\includegraphics[width=2.3in]{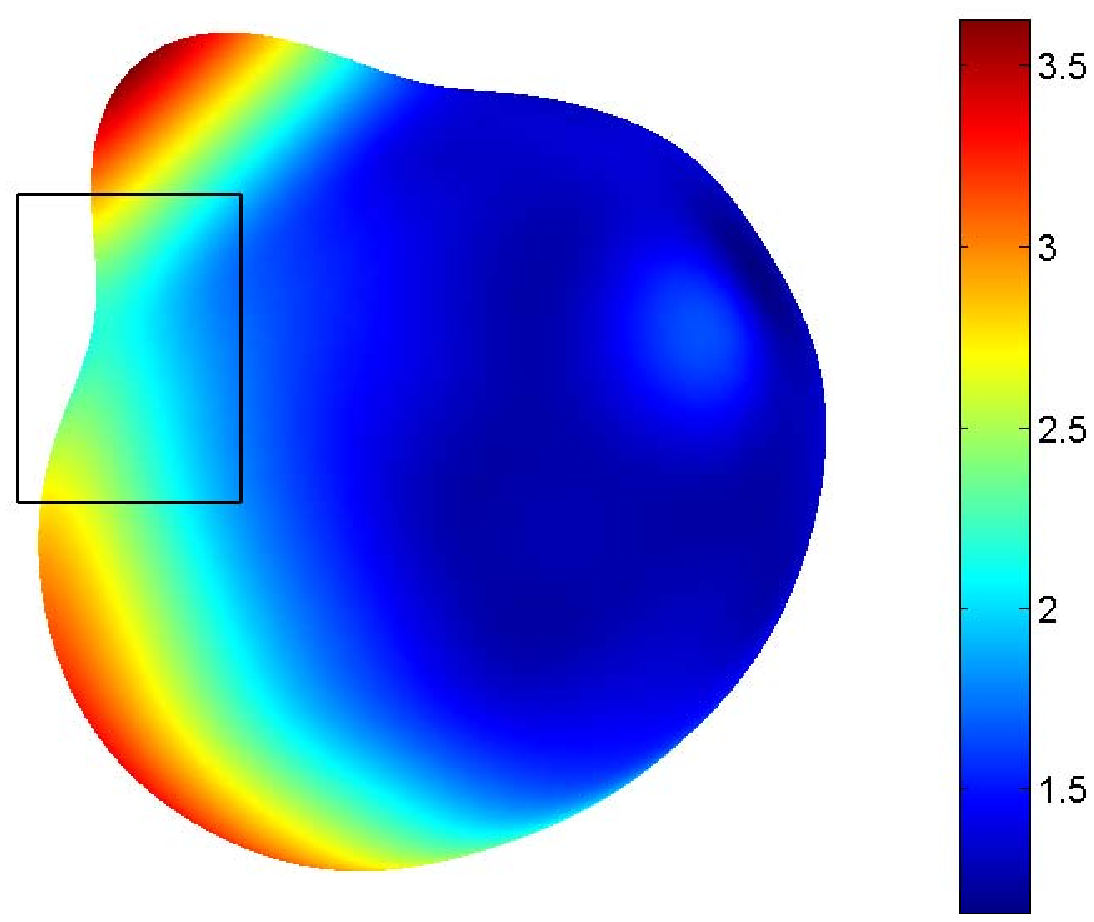}}
\subfigure[\tiny{$l_2-l_1$ restoration errors}]{ \label{errP1_frankecapnoise} %% label for first subfigure
\includegraphics[width=2.3in]{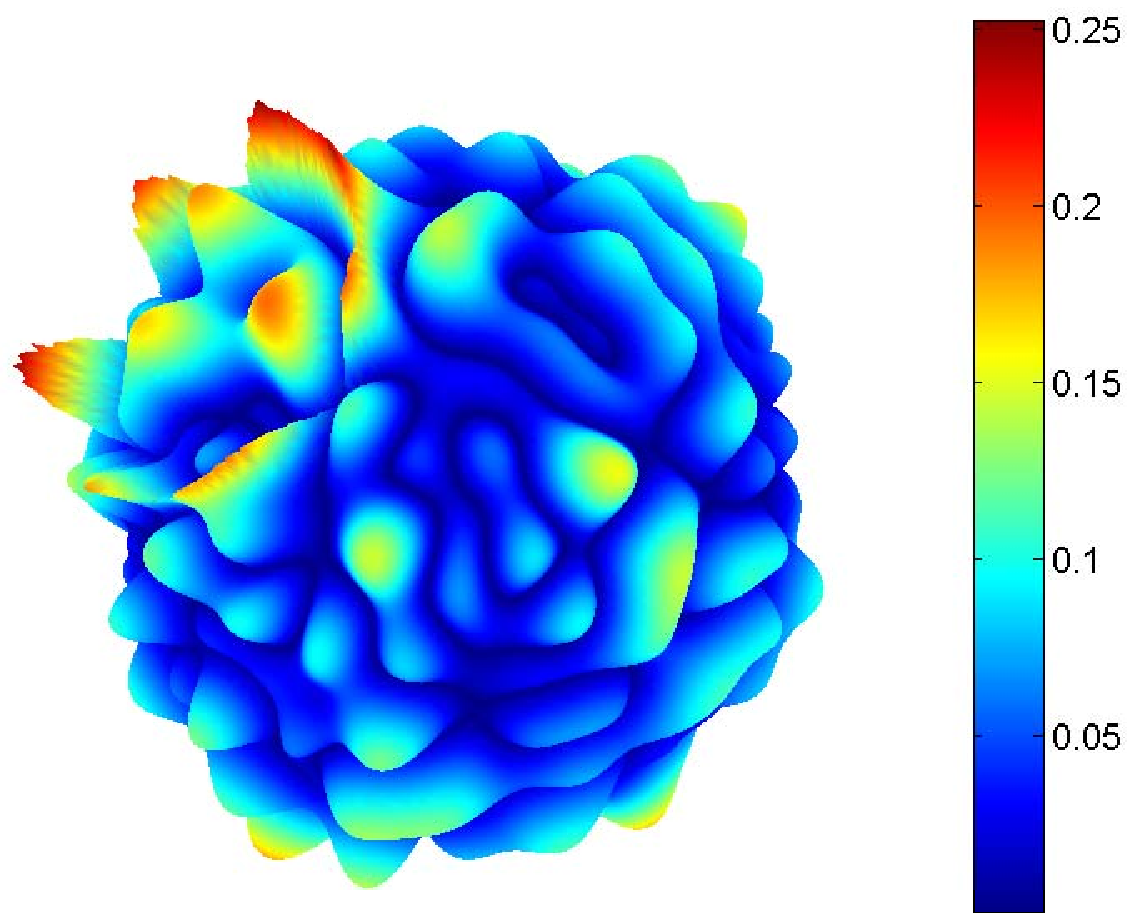}}
\subfigure[\tiny{$l_2-l_2$ restoration errors}]{ \label{errP2_frankecapnoise} %% label for second subfigure
\includegraphics[width=2.4in]{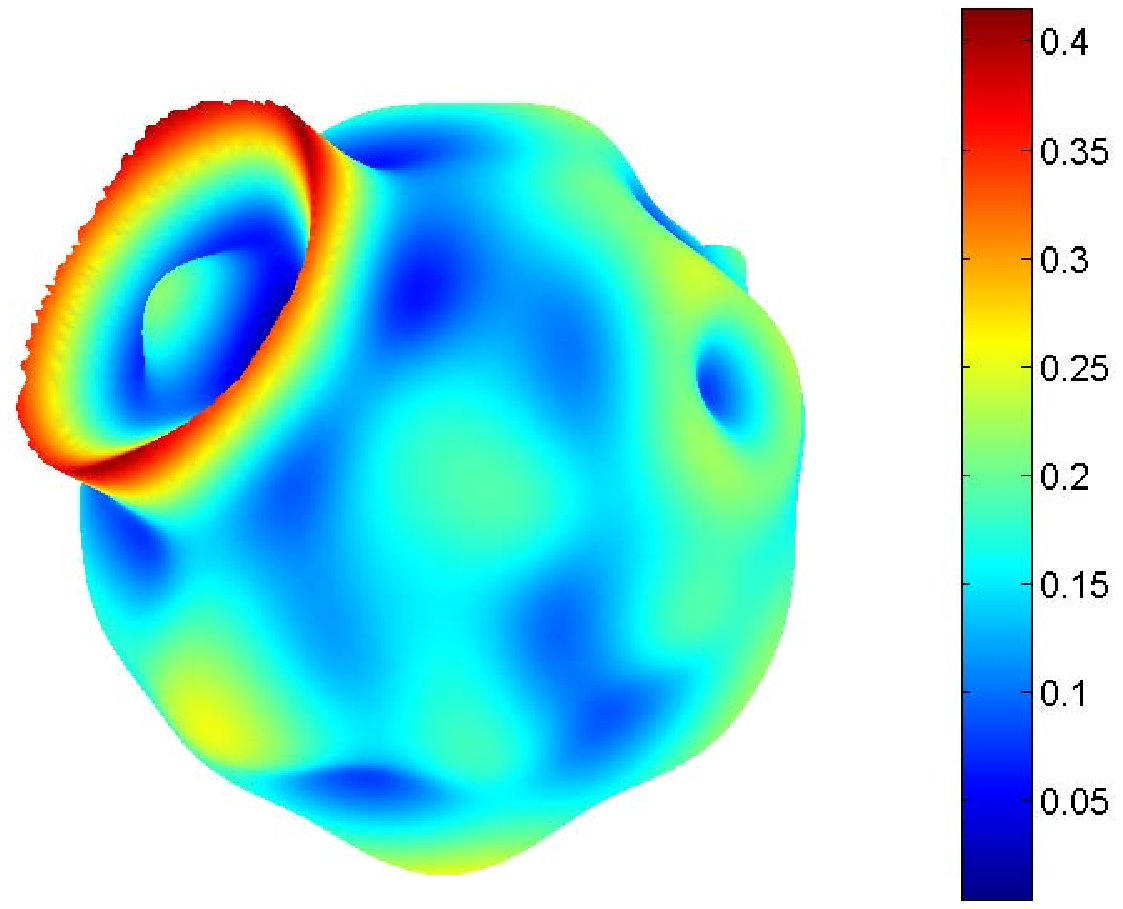}}
\caption{Restoration of $f_2$ using spherical $37_{0.1}$-design with models (\ref{weighted_2}) and (\ref{weighted_3})}
\end{figure}
A spherical $37_{0.1}$-design with 514 points is used as the data point set in this experiment. Other settings in this experiment are $\delta = 0.5$, $\lambda = 10^{-20},10^{-19.5},\ldots,10^{5}$, $\mx_c = (-0.5,-0.5,\sqrt{0.5})^T$, $r = 0.5$ and $\rho = 1$.
The restorations of $f_2$ using both models (\ref{weighted_2}) and (\ref{weighted_3}) are depicted in Fig. 5.3. Similar with previous experiments, we choose the values of $\lambda$ resulting in minimal uniform errors for each model and plot the shape of the restoration function on the sphere.
From Fig. 5.3(c)(d)(e)(f), restoration by model (\ref{weighted_2})  is not as smooth  as restoration by model (\ref{weighted_3}) but has smaller errors. And more notably, as highlighted by the rectangle in Fig. 5.3(c)(d), model (\ref{weighted_2}) restores the non-smooth edges of the spherical cap more accurately than model (\ref{weighted_3}).

\vspace{0.05in}
\noindent{\bf Acknowledgement.} We would like to thank Professor Ian Sloan  for
his valuable and helpful comments on spherical $t_\epsilon$-designs.

\end{document}